\documentclass{amsart}

\usepackage[english]{babel}
\usepackage{csquotes}
\usepackage[style=alphabetic,
isbn=false,
doi=false,
url=false,
backend=bibtex,
maxnames=10,
maxalphanames=5
]{biblatex}
\renewbibmacro{in:}{}
\bibliography{sources.bib}
\usepackage{amsmath,amsfonts,amsthm,mathtools, amssymb}
\usepackage{xcolor}
\usepackage{tikz-cd}
\usepackage{hyperref}
\usepackage{aliascnt}
\usepackage{xparse}
\usepackage{tensor}
\usepackage{caption}
\usepackage{mathrsfs}
\usepackage{tabto}
\usepackage{imakeidx}
\usepackage[normalem]{ulem}
\usepackage{upgreek}





\usepackage{tikz}
\usetikzlibrary{matrix,arrows,calc}
\usetikzlibrary{positioning}
\usetikzlibrary{decorations.pathreplacing,decorations.markings,decorations.pathmorphing}
\usetikzlibrary{positioning,arrows,patterns}
\usetikzlibrary{cd}
\usetikzlibrary{intersections}


\tikzset{
	every loop/.style={very thick},
	comp/.style={circle,fill,black,,inner sep=0pt,minimum size=5pt},
	order bottom left/.style={pos=.05,left,font=\tiny},
	order top left/.style={pos=.9,left,font=\tiny},
	order bottom right/.style={pos=.05,right,font=\tiny},
	order top right/.style={pos=.9,right,font=\tiny},
	order node dis/.style={text width=.75cm},
	circled number/.style={circle, draw, inner sep=0pt, minimum size=12pt},
	below left with distance/.style={below left,text height=10pt},
	below right with distance/.style={below right,text height=10pt}
}

\makeatletter
\setcounter{tocdepth}{1}
\ifcsname phantomsection\endcsname
\newcommand*{\@gobblenexttocentry}[9]{}
\else
\newcommand*{\@gobblenexttocentry}[4]{}
\fi
\newcommand*{\addsubsection}{%
	\addtocontents{toc}{\protect\@gobblenexttocentry}%
	\subsection*}
\makeatother

\begin{document}
	
	\def\subsectionautorefname{Section}
	\def\subsubsectionautorefname{Section}
	\def\sectionautorefname{Section}
	\def\equationautorefname~#1\null{(#1)\null}

\newcommand{\mynewtheorem}[4]{
	\if\relax\detokenize{#3}\relax 
	\if\relax\detokenize{#4}\relax 
	\newtheorem{#1}{#2}
	\else
	\newtheorem{#1}{#2}[#4]
	\fi
	\else
	\newaliascnt{#1}{#3}
	\newtheorem{#1}[#1]{#2}
	\aliascntresetthe{#1}
	\fi
	\expandafter\def\csname #1autorefname\endcsname{#2}
}

\newcommand{\mcomment}[1]{\textcolor{red}{#1}}
\newcommand{\dcomment}[1]{\textcolor{blue}{#1}}

\mynewtheorem{theorem}{Theorem}{}{section}
\mynewtheorem{lemma}{Lemma}{theorem}{}
\mynewtheorem{prop}{Proposition}{lemma}{}
\mynewtheorem{cor}{Corollary}{lemma}{}
\mynewtheorem{question}{Question}{lemma}{}
\mynewtheorem{assumption}{Assumption}{lemma}{}
\mynewtheorem{example}{Example}{lemma}{}
\theoremstyle{definition}
\newtheorem{definition}[theorem]{Definition}

\theoremstyle{remark}
\newtheorem{reminder}[theorem]{Reminder}
\newtheorem{rem}[theorem]{Remark}


\def\defbb#1{\expandafter\def\csname b#1\endcsname{\mathbb{#1}}}
\def\defcal#1{\expandafter\def\csname c#1\endcsname{\mathcal{#1}}}
\def\deffrak#1{\expandafter\def\csname frak#1\endcsname{\mathfrak{#1}}}
\def\defop#1{\expandafter\def\csname#1\endcsname{\operatorname{#1}}}
\def\defbf#1{\expandafter\def\csname b#1\endcsname{\mathbf{#1}}}

\makeatletter
\def\defcals#1{\@defcals#1\@nil}
\def\@defcals#1{\ifx#1\@nil\else\defcal{#1}\expandafter\@defcals\fi}
\def\deffraks#1{\@deffraks#1\@nil}
\def\@deffraks#1{\ifx#1\@nil\else\deffrak{#1}\expandafter\@deffraks\fi}
\def\defbbs#1{\@defbbs#1\@nil}
\def\@defbbs#1{\ifx#1\@nil\else\defbb{#1}\expandafter\@defbbs\fi}
\def\defbfs#1{\@defbfs#1\@nil}
\def\@defbfs#1{\ifx#1\@nil\else\defbf{#1}\expandafter\@defbfs\fi}
\def\defops#1{\@defops#1,\@nil}
\def\@defops#1,#2\@nil{\if\relax#1\relax\else\defop{#1}\fi\if\relax#2\relax\else\expandafter\@defops#2\@nil\fi}
\makeatother

\defbbs{ZHQCNPALRVWBEVW}
\defcals{DOPQMNXYLTRAEHZKCFIUZ}
\deffraks{apijklmnopqueR}
\defops{PGL,SL,Sp,mod,Spec,Re,Gal,Tr,End,GL,Hom,PSL,H,div,Aut,rk,Mod,R,T,Tr,Mat,Vol,MV,Res,vol,Z,diag,Hyp,ord,Im,ev,U,dev,c,CH,fin,pr,Pic,lcm,ch,td,LG,id,Sym,Aut}
\defbfs{kiuvzwp} 

\def\ep{\varepsilon}
\def\abs#1{\lvert#1\rvert}
\def\dd{\mathrm{d}}
\def\inj{\hookrightarrow}
\def\eq{=}
\newcommand{\hyp}{{\rm hyp}}
\newcommand{\odd}{{\rm odd}}

\def\i{\mathrm{i}}
\def\e{\mathrm{e}}
\def\st{\mathrm{st}}
\def\ct{\mathrm{ct}}

\def\uC{\underline{\bC}}
\def\ol{\overline}

\def\Vrel{\bV^{\mathrm{rel}}}
\def\Wrel{\bW^{\mathrm{rel}}}
\def\twolev{\mathrm{LG_1(B)}}

\def\be{\begin{equation}}   \def\ee{\end{equation}}     \def\bes{\begin{equation*}}    \def\ees{\end{equation*}}
\def\ba{\be\begin{aligned}} \def\ea{\end{aligned}\ee}   \def\bas{\bes\begin{aligned}}  \def\eas{\end{aligned}\ees}
\def\={\;=\;}  \def\+{\,+\,} \def\m{\,-\,}

\newcommand*{\proj}{\mathbb{P}}
\newcommand{\barmoduli}[1][g]{{\overline{\mathcal M}}_{#1}}
\newcommand{\moduli}[1][g]{{\mathcal M}_{#1}}
\newcommand{\omoduli}[1][g]{{\Omega\mathcal M}_{#1}}
\newcommand{\modulin}[1][g,n]{{\mathcal M}_{#1}}
\newcommand{\omodulin}[1][g,n]{{\Omega\mathcal M}_{#1}}
\newcommand{\zomoduli}[1][]{{\mathcal H}_{#1}}
\newcommand{\barzomoduli}[1][]{{\overline{\mathcal H}_{#1}}}
\newcommand{\pomoduli}[1][g]{{\proj\Omega\mathcal M}_{#1}}
\newcommand{\pomodulin}[1][g,n]{{\proj\Omega\mathcal M}_{#1}}
\newcommand{\pobarmoduli}[1][g]{{\proj\Omega\overline{\mathcal M}}_{#1}}
\newcommand{\pobarmodulin}[1][g,n]{{\proj\Omega\overline{\mathcal M}}_{#1}}
\newcommand{\potmoduli}[1][g]{\proj\Omega\tilde{\mathcal{M}}_{#1}}
\newcommand{\obarmoduli}[1][g]{{\Omega\overline{\mathcal M}}_{#1}}
\newcommand{\obarmodulio}[1][g]{{\Omega\overline{\mathcal M}}_{#1}^{0}}
\newcommand{\otmoduli}[1][g]{\Omega\tilde{\mathcal{M}}_{#1}}
\newcommand{\pom}[1][g]{\proj\Omega{\mathcal M}_{#1}}
\newcommand{\pobarm}[1][g]{\proj\Omega\overline{\mathcal M}_{#1}}
\newcommand{\pobarmn}[1][g,n]{\proj\Omega\overline{\mathcal M}_{#1}}
\newcommand{\princbound}{\partial\mathcal{H}}
\newcommand{\omoduliinc}[2][g,n]{{\Omega\mathcal M}_{#1}^{{\rm inc}}(#2)}
\newcommand{\obarmoduliinc}[2][g,n]{{\Omega\overline{\mathcal M}}_{#1}^{{\rm inc}}(#2)}
\newcommand{\pobarmoduliinc}[2][g,n]{{\proj\Omega\overline{\mathcal M}}_{#1}^{{\rm inc}}(#2)}
\newcommand{\otildemoduliinc}[2][g,n]{{\Omega\widetilde{\mathcal M}}_{#1}^{{\rm inc}}(#2)}
\newcommand{\potildemoduliinc}[2][g,n]{{\proj\Omega\widetilde{\mathcal M}}_{#1}^{{\rm inc}}(#2)}
\newcommand{\omoduliincp}[2][g,\lbrace n \rbrace]{{\Omega\mathcal M}_{#1}^{{\rm inc}}(#2)}
\newcommand{\obarmoduliincp}[2][g,\lbrace n \rbrace]{{\Omega\overline{\mathcal M}}_{#1}^{{\rm inc}}(#2)}
\newcommand{\obarmodulin}[1][g,n]{{\Omega\overline{\mathcal M}}_{#1}}
\newcommand{\LTH}[1][g,n]{{K \overline{\mathcal M}}_{#1}}
\newcommand{\PLS}[1][g,n]{{\bP\Xi \mathcal M}_{#1}}

\newcommand*{\can}{\mathrm{can}}
\newcommand*{\sing}{\mathrm{sing}}
\newcommand*{\reg}{\mathrm{reg}}
\newcommand*{\Ex}{\mathrm{Ex}}
\newcommand*{\Spin}{\mathrm{Spin}}
\newcommand*{\SU}{\mathrm{SU}}
\newcommand*{\sm}{\mathrm{sm}}
\newcommand*{\codim}{\mathrm{codim}}
\newcommand*{\Proj}{\mathrm{Proj}}
\newcommand*{\ad}{\mathrm{ad}}
\newcommand*{\Div}{\mathrm{Div}}
\newcommand*{\Ad}{\mathrm{Ad}}
\newcommand*{\SO}{\mathrm{SO}}

\newcommand*{\Tw}[1][\Lambda]{\mathrm{Tw}_{#1}}  
\newcommand*{\sTw}[1][\Lambda]{\mathrm{Tw}_{#1}^s}  

\newcommand{\bfa}{{\bf a}}
\newcommand{\bfb}{{\bf b}}
\newcommand{\bfd}{{\bf d}}
\newcommand{\bfe}{{\bf e}}
\newcommand{\bff}{{\bf f}}
\newcommand{\bfg}{{\bf g}}
\newcommand{\bfh}{{\bf h}}
\newcommand{\bfm}{{\bf m}}
\newcommand{\bfn}{{\bf n}}
\newcommand{\bfp}{{\bf p}}
\newcommand{\bfq}{{\bf q}}
\newcommand{\bfP}{{\bf P}}
\newcommand{\bfR}{{\bf R}}
\newcommand{\bfU}{{\bf U}}
\newcommand{\bfu}{{\bf u}}
\newcommand{\bfz}{{\bf z}}

\newcommand{\bfl}{{\boldsymbol{\ell}}}
\newcommand{\bfmu}{{\boldsymbol{\mu}}}
\newcommand{\bfeta}{{\boldsymbol{\eta}}}
\newcommand{\bfomega}{{\boldsymbol{\omega}}}

\newcommand{\wh}{\widehat}
\newcommand{\wt}{\widetilde}

\newcommand{\ps}{\mathrm{ps}}  

\newcommand{\tdpm}[1][{\Gamma}]{\mathfrak{W}_{\operatorname{pm}}(#1)}
\newcommand{\tdps}[1][{\Gamma}]{\mathfrak{W}_{\operatorname{ps}}(#1)}

\newlength{\halfbls}\setlength{\halfbls}{.5\baselineskip}
\newlength{\halbls}\setlength{\halfbls}{.5\baselineskip}

\newcommand*{\Hrel}{\cH_{\text{rel}}^1}
\newcommand*{\Hrelbar}{\overline{\cH}^1_{\text{rel}}}

\newcommand*\interior[1]{\mathring{#1}}

\newcommand{\prodt}[1][\lceil j \rceil]{t_{#1}}
\newcommand{\prodtL}[1][\lceil L \rceil]{t_{#1}}


	
	\title[Milnor-Wood inequality for klt varieties and uniformization]
	{Milnor-Wood inequality for klt varieties of general type and uniformization}
	
	\author{Matteo Costantini}
	\address{Essener Seminar für Algebraische Geometrie und Arithmetik, Fakultät für Mathematik, Universit\"at Duisburg-Essen, 45117 Essen}
	\email{matteo.costantini@uni-due.de}
	
	\author{Daniel Greb}
	\address{Essener Seminar für Algebraische Geometrie und Arithmetik, Fakultät für Mathematik, Universit\"at Duisburg-Essen, 45117 Essen}
	\email{daniel.greb@uni-due.de}
	
	\thanks{The research of the authors has been supported by the DFG-Research Training Group 2553 “Symmetries and classifying spaces: analytic, arithmetic, and derived”.}
	
	\date{\today}
	
	\subjclass[2020]{32Q30, 32M15, 53C35, 14E30, 53C24, 53C43}
	
	\begin{abstract}
		We generalize the definition of the Toledo invariant for representations of fundamental groups of smooth varieties of general type due to Koziarz and Maubon to the context of singular klt varieties, where the natural fundamental groups to consider are those of smooth loci. Assuming that the rank of the target Lie group is not greater than two, we show that the Toledo invariant satisfies a Milnor-Wood type inequality and we characterize the corresponding maximal representations.
	\end{abstract}
	\maketitle
	\tableofcontents
	
\section{Introduction}

The Toledo invariant is classically a characteristic number naturally associated to representations of lattices of semisimple Lie groups of Hermitian type into other semisimple Lie groups of Hermitian type. Recall that a real semisimple Lie group $G$ (with no compact factors) is said to be of Hermitian type if its associated symmetric space is Hermitian symmetric, which means that it admits a $G$-invariant K\"ahler form. The most general classical definition was given in \cite{burgeriozzi} in terms of the second bounded cohomology of the involved Lie groups and lattices. The Milnor-Wood inequality and in particular maximal representations were deeply studied in many works, see for example \cite{Toledo}, \cite{corlette}, \cite{GM},\cite{BGPG}, \cite{BIW}, \cite{Poz}, \cite{km2}, \cite{KMfoliatedhiggs}, \cite{spinaci} only to cite a few. In this paper we are interested in the generalization considered by Koziarz and Maubon in \cite{KMgentype}, in which they no longer considered representations of  complex hyperbolic lattices,  but more generally of fundamental groups of smooth varieties of general type. This generalization was possible since a proof of the Milnor-Wood inequality and the uniformizing equality case can be given in differential geometric terms using the point of view of Higgs bundles via Simpson's correspondence.

Motivated by the fact that ball quotients can have finite quotient  singularities, which are particular examples of klt singularities, and by recent generalizations (\cite{GKPT, grebkebpetmathann}) of classical uniformization results to the klt setup, which naturally appears when considering minimal models of varieties of general type, we investigate representations of fundamental groups associated with singular varieties of general type. It turns out that in the presence of singularities the natural representations to consider are those of the fundamental groups of smooth loci. The definition of Toledo invariant can be made in this more general context thanks to the work of Mochizuki \cite{mochizuki_asterisque}, in which he generalizes the classical result of Corlette  and shows existence of harmonic metrics for flat bundles over quasi-projective varieties. Moreover, the Higgs bundle approach of Koziarz and Maubon to prove the Milnor-Wood inequality can also be  applied in the singular klt setting because of the extension of Simpson's correspondence to this case by Greb-Kebekus-Peternell-Taji \cite{grebkebpetmathann}.

We begin by defining the Toledo invariant; throughout we will work over the complex numbers. Let $X$ be an algebraic klt variety of dimension $d\geq 2$ of general type with nef canonical bundle, cf.~\autoref{sec:klt}. Let $X_{\text{reg}}\subseteq X$ be its regular locus, let $G$ be a Hermitian  Lie group, and let $\rho\colon \pi_1(X_\reg) \to G$ be a reductive representation.\footnote{See Remark~\ref{rem:nonred} for a discussion of non-reductive representations.}
Let $(X_{\text{reg}})^\mathfrak{u}\to X_{\text{reg}}$ be the universal cover of the regular locus and $f:(X_{\text{reg}})^\mathfrak{u}\to \mathcal{Y}_G$ be the harmonic $\rho$-equivariant map to the symmetric space of $G$ whose existence and uniqueness follows from the work of Mochizuki \cite[Prop. A.18]{mochizuki_asterisque}. Let finally $\omega_{\mathcal{Y}_G}$ be the cohomology class of the K\"ahler form of the symmetric metric on $\mathcal{Y}_G$. Then, we define the \emph{Toledo invariant} of the representation as 
\begin{equation}\label{eq:MWinequ}
\tau(\rho):=\frac{1}{4\pi}\int_X f^*(\omega_{\mathcal{Y}_G})\wedge \c_1(K_X)^{d-1} \in \mathbb{Q}
\end{equation}
Here, $f^*(\omega_{\mathcal{Y}_G})$ is considered as a class on $X$ (which we can due owing to $\rho$-equivariance of $f$). Note that the previous definition is well-defined since $K_X$ is big and nef, which implies that some power is base point free, and hence by Bertini's theorem  that we can move a multiple of the class $\c_1(K_X)^{d-1}$ away from the singularities. The fact that the Toledo invariant is a rational number follows from considerations recalled at the end of Section~\ref{sec:Ghiggs} below. 

The following is our main result.
\begin{theorem}
\label{thm:intro}
Let $X$ be a projective variety of general type  of dimension $d\geq 2$ with at worst klt singularities and with nef canonical bundle, and let $X_{\text{reg}}\subseteq X$ be its regular locus. 
Let $\rho:\pi_1(X_{\text{reg}})\longrightarrow G$ be a reductive representation into a classical Hermitian Lie groups of rank $\rk(G)\leq 2$ other than $G=\SO^*(10)$. 

Then, the following inequality holds: 
\[|\tau(\rho)| \leq \rk(G)\cdot\frac{K_{X}^d}{d+1}.\]

Equality holds if and only  if the canonical model $X_{\text{can}}$ of $X$ is the quasi-étale quotient of a smooth ball quotient $Z$ by a finite group, $G=\SU(p,q)$ with $p\geq qd$, and $\rho$ induces a representation $\rho_Z:\pi_1(Z)\to G$ whose $\rho_Z$-equivariant harmonic map is a holomorphic or antiholomorphic proper embedding from the universal cover of $Z$  onto a totally geodesic copy of complex hyperbolic $d$-space, of induced holomorphic sectional curvature $-1/q$, in the symmetric space associated to $G$.
\end{theorem}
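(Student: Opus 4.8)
The plan is to follow the Higgs-bundle strategy of Koziarz–Maubon, adapted to the klt setting. First I would pass to the canonical model $X_{\can}$, which is again klt with ample canonical divisor, and note that $\pi_1(X_{\reg})$ and $\pi_1((X_{\can})_{\reg})$ coincide up to the issues handled by the quasi-étale theory, so the representation $\rho$ and the Toledo invariant are unchanged. Using the extension of Simpson's correspondence to the klt case (Greb–Kebekus–Peternell–Taji), the reductive representation $\rho\colon\pi_1(X_{\reg})\to G\subseteq\GL(N,\bC)$ corresponds to a polystable Higgs bundle $(E,\theta)$ on $X_{\can}$ with vanishing Chern classes (in the appropriate orbifold/$\bQ$-sense), carrying the extra structure coming from the Hermitian group $G$: a decomposition $E=V\oplus W$ with $\theta$ off-diagonal, $\theta=\begin{pmatrix}0&\beta\\\gamma&0\end{pmatrix}$, reflecting the Hodge-type grading on the symmetric space $\cY_G$. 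The Toledo invariant \eqref{eq:MWinequ} then gets reinterpreted, via the harmonic metric, as a multiple of $\int_{X_{\can}}\big(\deg_L(V)-\deg_L(W)\big)$-type expression, where $L=\c_1(K_{X_{\can}})$; concretely $\tau(\rho)$ is a positive multiple of $\int_X\big(\c_1(E^{+})-\c_1(E^{-})\big)\wedge L^{d-1}$ for the relevant sub-bundles, where the precise constant and the precise sub-bundles depend on the classical group (this is where the rank $\le 2$ hypothesis and the exclusion of $\SO^*(10)$ enter — only for these groups is the combinatorics of the Hodge grading simple enough for the argument to close).

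The core estimate is then a slope inequality. Polystability of $(E,\theta)$ forces $\mu_L$ of any $\theta$-invariant subsheaf to be $\le\mu_L(E)=0$; applying this to the $\theta$-subsheaves built from $V$ and $W$ (e.g. $\ker\beta$, $\operatorname{im}\gamma$, and their duals twisted by $\Omega^1_{X_{\can}}$, using that $\theta$ maps into $E\otimes\Omega^1$), together with the bound $\rk$ of the maps $\beta,\gamma$ and the key input that $\Omega^1_{X_{\can}}$ has a slope bound coming from $K_{X_{\can}}$ being the determinant — more precisely, using the miyaoka-type / Bogomolov-type semistability results available in the klt setting — one extracts $|\tau(\rho)|\le \rk(G)\cdot\frac{K_{X_{\can}}^d}{d+1}=\rk(G)\cdot\frac{K_X^d}{d+1}$. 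The factor $\frac{1}{d+1}$ and the exponent $d-1$ on $\c_1(K_X)$ ultimately come from the identity $\int_X c_1(\Omega^1_X)\wedge L^{d-1} = -\int_X L^d$ combined with the numerical positivity of $\theta\wedge\bar\theta$ integrated against $L^{d-1}$; one should phrase everything in terms of intersection numbers on $X_{\can}$ and appeal to the fact (used already to show $\tau(\rho)\in\bQ$) that these integrals are computed on a general smooth complete-intersection surface/curve cut out by a large multiple of $K_{X_{\can}}$, reducing the inequality to the already-known smooth surface case plus a Lefschetz-type argument.

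For the equality case: equality in the slope inequality forces the Higgs field $\theta$ to be, up to the grading, an isomorphism onto $\Omega^1_{X_{\can}}\otimes(\text{line bundle})$, i.e. $\beta$ (or $\gamma$) is an isomorphism $V\xrightarrow{\sim}W\otimes\Omega^1_{X_{\can}}$. By the klt uniformization theorem of Greb–Kebekus–Peternell–Taji (and the $\bQ$-Chern class vanishing), this isomorphism is exactly the Higgs-theoretic signature of $X_{\can}$ being a quasi-étale quotient of a ball quotient $Z$; pulling back to $Z$ one recovers a genuine variation of Hodge structure of ball-quotient type, which pins down $G=\SU(p,q)$ with $p\ge qd$, and the harmonic map becomes the totally geodesic holomorphic (or antiholomorphic, according to the sign of $\tau$) embedding of $\bB^d$ with the stated normalization of the holomorphic sectional curvature $-1/q$ — the constant $1/q$ being forced by matching the pulled-back Kähler form of $\cY_G$ with that of the ball metric via the equality case of the Ahlfors–Schwarz estimate. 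Conversely, given such data one checks directly that equality holds. The main obstacle I expect is the first reduction step: making the passage to $X_{\can}$ and the identification of $\tau(\rho)$ with a Higgs-bundle degree expression fully rigorous in the singular setting — one must control the behaviour of the harmonic metric and of $f^*\omega_{\cY_G}$ near the (codimension $\ge 2$) singular locus and along a generic complete-intersection surface, and verify that the polystable Higgs bundle furnished by the GKPT correspondence is the one attached to the harmonic metric of Mochizuki, so that the two definitions of $\tau(\rho)$ agree.
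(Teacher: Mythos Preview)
Your plan for the inequality is essentially the paper's: build the Higgs bundle $(E_\reg,\theta_\reg)$ on $X_\reg$ from Mochizuki's harmonic metric, express $\tau(\rho)$ as $\deg_{K_X}(W_\reg)$, and combine polystability of $(E_\reg,\theta_\reg)$ with respect to $K_X$ (this is where the GKPT extension of Simpson's correspondence enters) with Guenancia's semistability of $T_X$ with respect to $K_X$ in a slope argument applied to $\theta$-invariant subsheaves such as $W_\reg\oplus\Im(\beta_\reg)$. One small point: you do not need to reduce to a smooth surface via Lefschetz; the degrees are computed on complete-intersection curves inside $X_\reg$, and the slope argument runs directly there. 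Your initial ``pass to $X_\can$'' is used in the paper only to verify polystability (it is done inside that proof), and note that $\pi_1(X_\reg)$ and $\pi_1((X_\can)_\reg)$ are not known to coincide --- there is only a natural map from the latter to the former, which is what is needed.

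For the equality case your plan has a genuine gap. What equality yields is that $\beta_\reg:W_\reg\otimes T_{X_\reg}\to V_\reg$ is generically injective with $\deg(W_\reg)=-\deg(\Im(\beta_\reg))$; it does not give an isomorphism $V\cong W\otimes\Omega^1$ (indeed $\rk\Im(\beta)=qd\le p=\rk V$), nor does it produce the Miyaoka--Yau equality for $T_X$ that the GKPT uniformisation theorem takes as input. The paper's route is different and the missing step is essential: take a maximally quasi-\'etale Galois cover $\gamma:Y\to X$, so that every linear representation of $\pi_1(Y_\reg)$ extends to $\pi_1(Y)$; then, using Takayama's isomorphism $\pi_1(Y)\cong\pi_1(Y_\can)$ and a resolution $\varphi:\widetilde{Y}_\can\to Y_\can$, the original $\rho$ induces a representation of the fundamental group of the \emph{smooth projective} variety $\widetilde{Y}_\can$, still with maximal Toledo invariant for the polarisation $\varphi^*K_{Y_\can}$. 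On this smooth compact model one runs the Koziarz--Maubon harmonic-map argument directly (holomorphicity of $f$, image contained in a totally geodesic copy of $\bB^d$, ramification-divisor computation using ampleness of $K_{Y_\can}$) to conclude that $Y_\can$ itself is a smooth ball quotient; a separate lemma then shows $X_\can$ is a quasi-\'etale quotient of $Y_\can$. Without the maximally quasi-\'etale cover you remain on $X_{\can,\reg}$, where the harmonic map lives only over the universal cover of a noncompact space and the endgame (properness of $f$, purity of the ramification locus) does not go through; invoking the GKPT $c_1^2/c_2$ uniformisation theorem does not bridge this, since nothing in your argument produces that Chern-class equality.
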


The hypothesis on the rank of $G$ should morally not be there, since in the homogeneous situation \cite{KMfoliatedhiggs} it is not a needed as an assumption, and since furthermore the uniformization result in the maximal case does not require it. The low rank of the group is however important for the current proof of the Milnor-Wood inequality via semistability considerations for Higgs bundles.

As already mentioned above, in the singular case considering fundamental groups of varieties $X$ of general type is not the correct thing to do, as we have examples of quotients of smooth ball quotients by finite groups acting freely in codimension one that have trivial fundamental group (see \cite[Thm.~1.1]{keum}); for these examples the Toledo invariant defined via a representation of $\pi_1(X)$ is not maximal, that is, equality in the Milnor-Wood inequality for the fundamental group is no longer a necessary condition. 

\subsection*{Outline of the paper}
We will recall the notions and results about klt varieties, $G$-Higgs bundles over quasi-projective varieties and semistability results in \autoref{sec:back}.

We will then prove \autoref{thm:intro} in \autoref{sec:main}, by dividing the statement into the Milnor-Wood inequality and the uniformization result in the maximal case. 
To prove the main result we follow the strategy used in \cite{KMgentype} for the case of a smooth $X$. The Milnor-Wood inequality \autoref{thm:MWinequality} is proven via considering the Higgs bundle associated to the representations and using semistability results.
We will then show the uniformization result \autoref{thm:equality} in the maximal Toledo invariant situation via translating the problem to the case of a representation of the fundamental group of a smooth variety, making use of the existence of a maximally quasi-\'etale cover for klt varieties established in \cite{grebkebpetduke}. On this related smooth variety, we can then implement the same approach as in \cite{KMgentype} and conclude.
\par
\subsection*{Acknowledgments} We thank Vincent Koziarz for inspiring discussions and the referee for close reading and suggestions that improved the exposition of this paper.

	

\section{Foundational material}
\label{sec:back}

In this section we define the setting we are working in and recall notions and results about klt spaces and Higgs bundles over quasi-projective varieties. We finally state the two main ingredients that will be needed to prove the main inequality \eqref{thm:MWinequality} in the next section.

\subsection{Klt spaces }
\label{sec:klt}
Throughout the present paper, all varieties will be defined over the complex numbers. For a variety $X$  we will denote by $X_\reg$ the regular locus and by $i:X_\reg\to X$ the inclusion. We will also refer to \cite{kollarmori} for the definition of \emph{Kawamata log terminal singularities} and we will say that a variety $X$ is \emph{klt} if  it has at worst Kawamata log terminal singularities; in particular, we require $X$ to be  normal, and $K_X:=i_*(K_{X_\reg})$ to be a $\bQ$-Cartier divisor.

We will say that a klt variety is \emph{of general type} if the canonical divisor is big. 

\begin{rem}
While our definition of "general type" is not birationally invariant, it is appropriate in our context, since there exist finite quotients of smooth ball quotient surfaces that have finite quotient (klt) singularities  whose canonical divisor is ample but whose minimal resolution is not of general type, see again \cite[Thm.~1.1]{keum}. Since one of our aims is to characterize such ball quotients, these examples should be included in the definition of "general type". It should be mentioned that for \emph{canonical} singularities, there is no difference between the two definitions. 
\end{rem}

Since we will work with canonical models of klt varieties of general type with nef canonical bundle, we remind the reader of the following property. 

 \begin{reminder}
 \label{reminder}
If $X$ is a projective klt variety of general type with nef canonical bundle, by the Basepoint-Free Theorem \cite[Thm.~3.3]{kollarmori} the canonical bundle $K_X$ is semiample, i.e., there is a power of $K_X$  that is base-point free and then defines a birational morphism $q_X : X \to X_\can$ to a klt projective variety whose canonical divisor $K_{X_\can}$ is ample and such that $K_X = q_X^* K_{X_\can}$, see \cite[Lem.~2.30 or Prop.~3.51]{kollarmori}. The \emph{canonical model} $X_\can$ can be identified with $\Proj (R(K_X))$, where $R(K_X)$ is the \emph{canonical ring} of $X$, i.e., the section ring of $K_X$.
 \end{reminder}

 We now recall an important result of Takayama relating the fundamental group of $X$ and the one of its canonical model. 
 
\begin{theorem}[{\cite[Cor.~1.1(1)]{takayama}}]
\label{thm:takayama}
Let $X$ be a projective klt variety of general type with nef canonical divisor and let $q_X:X\to X_\can$ be the canonical morphism. Then, the induced homomorphism of fundamental groups $q_{X*}:\pi_1(X)\to \pi_1(X_\can)$ is an isomorphism.
\end{theorem}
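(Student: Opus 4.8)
The plan is to establish surjectivity and injectivity of $q_{X*}$ separately, exploiting that $q_X$ is a \emph{crepant birational contraction} between normal projective klt varieties: by \autoref{reminder} one has $q_{X*}\mathcal{O}_X=\mathcal{O}_{X_\can}$, so every fibre of $q_X$ is connected, and $K_X=q_X^*K_{X_\can}$. Since moreover $q_X$ is a proper birational morphism onto the normal variety $X_\can$, it is an isomorphism over an open subset whose complement $B\subseteq X_\can$ satisfies $\codim_{X_\can}B\geq 2$. Surjectivity of $q_{X*}$ is then elementary: the restriction map $\pi_1(X_\can\setminus B)\to\pi_1(X_\can)$ is surjective because $\codim_{X_\can}B\geq 2$, and since $q_X^{-1}(X_\can\setminus B)\xrightarrow{\ \sim\ }X_\can\setminus B$, this surjection factors through $q_{X*}$.

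Injectivity is the substantive part, and it is here that the klt hypothesis enters essentially, through two inputs. The first is geometric: the fibres of $q_X$ are \emph{rationally chain connected}. This follows from the rational chain connectedness of fibres of crepant contractions of klt pairs, which one can extract from the Cone Theorem or from the Hacon--McKernan theorem on Shokurov's rational connectedness conjecture applied to $(X,0)$ and $q_X$, along which $-K_X$ is trivial and hence $q_X$-nef. The second is the topological principle, going back to Koll\'ar's work on Shafarevich maps, that a proper surjective morphism of normal varieties with connected, rationally chain connected fibres induces an isomorphism on fundamental groups. Concretely, having already shown $q_{X*}$ surjective, the pullback $\widehat X:=X\times_{X_\can}\widetilde{X_\can}$ of the universal cover $u\colon\widetilde{X_\can}\to X_\can$ is a \emph{connected} covering space of $X$ with $\pi_1(\widehat X)\cong\ker q_{X*}$, so it suffices to prove $\pi_1(\widehat X)=1$; but $\widehat X\to\widetilde{X_\can}$ is again proper and surjective with connected, rationally chain connected fibres over the simply connected base $\widetilde{X_\can}$, and rationally chain connected fibres contribute nothing to the fundamental group beyond that of the base, whence $\pi_1(\widehat X)\cong\pi_1(\widetilde{X_\can})=1$.

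I expect the genuine difficulty to be precisely this last point: proving that a normal, proper, surjective fibration with rationally chain connected fibres over a simply connected base is simply connected. This is a form of Koll\'ar's theorem relating maximal rationally (chain-)connected fibrations to quotients of the fundamental group; here rational \emph{chain} connectedness (rather than mere uniruledness of the fibres) is what is leveraged to kill loops, and normality and properness of the total space enter in an essential way. By contrast, the rational chain connectedness of the fibres of $q_X$ itself I would treat as a citation, as would the standard fact that $\pi_1(X_\can\setminus B)\to\pi_1(X_\can)$ is surjective when $\codim_{X_\can}B\geq 2$, which is all that the surjectivity step relies on.
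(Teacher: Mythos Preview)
The paper does not give a proof of this statement at all; it is quoted as an external input from Takayama's work, with the citation \texttt{[Cor.~1.1(1)]} standing in lieu of any argument. There is therefore no ``paper's own proof'' to compare against---the authors treat this as a black box and use it later (in the proof of \autoref{prop:MWres}) to transport a representation from $\pi_1(Y)$ to $\pi_1(Y_\can)$.

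Your sketch is a reasonable outline of how the result is actually established in the literature, with one caveat worth flagging. Takayama's original argument predates Hacon--McKernan, so he does not invoke their rational connectedness theorem; rather, he works directly with Koll\'ar's Shafarevich-map machinery and the structure of the contraction. The modern route you describe---fibres of $q_X$ are rationally chain connected because $-K_X$ is $q_X$-trivial, then apply Koll\'ar's principle that such fibrations induce $\pi_1$-isomorphisms---is cleaner and is the way the result is usually packaged now. Either way, the step you correctly identify as the crux, namely that proper morphisms with rationally chain connected fibres over a simply connected normal base have simply connected total space, is not elementary and should be cited rather than re-proved; it requires care because rational \emph{chain} connectedness of singular fibres does not by itself force simple connectedness without some control on the singularities, which here comes from the klt assumption on $X$.
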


In addition, we will often use the following comparison result for fundamental groups, especially with $V = X_\reg$.

\begin{rem}
\label{rem:codim2}
If $V$ is a smooth quasi-projective variety and $Z$ is a closed subset of codimension at least two, then the inclusion induces an isomorphism $\pi_1(V)\cong \pi_1(V\setminus Z)$ of fundamental groups. 
\end{rem}
 On the other hand, even if the set of singular points of a variety has high codimension, the difference between the fundamental group of the smooth locus and of the entire variety may be huge; for example, the Kummer surface associated with the product of two elliptic curves has trivial fundamental group while its regular locus has infinite fundamental group. 

We now recall the strategy used in \cite{grebkebpetduke} to pass from studying representations and Higgs bundles over a quasi-projective variety to studying ones over a projective variety, thus solving the above problem at least when it comes to linear representations.  For this we first introduce the following notion.

\begin{definition}[Quasi-étale Galois  morphisms]
 A morphism $\gamma:X \to Y$ between normal varieties is called \emph{quasi-étale} if it is of relative dimension zero and étale in codimension one, i.e. if $\dim(X) = \dim(Y)$ and if there exists a closed subset $Z\subseteq X$ of codimension at least two such that $\gamma_{|X\setminus Z}: X\setminus Z \to Y$ is étale. The morphism is moreover called \emph{Galois} if there exists a finite group $G \subset \Aut(X)$ such that $\gamma$ is isomorphic to the quotient map.
\end{definition}

With this terminology, we have the following useful result.
\begin{theorem}[{\cite[Thm.~1.5]{grebkebpetduke}}]
\label{thm:maximalquasietale}
Let $X$ be a  quasi-projective klt variety. Then, there exists a normal variety $Y$ and a finite, surjective quasi-étale Galois morphism $\gamma:Y\to X$ such that the natural map $\widehat{i}_*:\widehat{\pi}_1(Y_\reg)\to \widehat{\pi}_1(Y)$ of étale fundamental groups\footnote{Recall that the étale fundamental group is isomorphic to the profinite completion of the fundamental group of the underlying topological space (with the Euclidean topology).} induced by the inclusion of the smooth locus is an isomorphism. We call such a morphism a maximally quasi-étale covering.
\end{theorem}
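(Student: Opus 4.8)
The plan is to realise the maximally quasi-étale covering as the top of a \emph{finite} tower of quasi-étale Galois covers of $X$, the finiteness being forced by a finiteness theorem of Xu for local étale fundamental groups of klt singularities. I would start from two formal reductions. First, for any normal variety $Z$ the inclusion $Z_\reg\hookrightarrow Z$ induces a surjection $\widehat{\pi}_1(Z_\reg)\twoheadrightarrow\widehat{\pi}_1(Z)$ (connected finite étale covers of $Z$ restrict to connected finite étale covers of the dense open $Z_\reg$), so only injectivity is in question; and for normal $Z$ this injectivity is \emph{equivalent} to the assertion that every quasi-étale cover of $Z$ is in fact étale. Indeed, a quasi-étale cover of $Z$ restricts over the \emph{smooth} variety $Z_\reg$ to a cover that is étale in codimension one, hence étale by Zariski--Nagata purity (compare \autoref{rem:codim2}), and thereby determines a class in $\widehat{\pi}_1(Z_\reg)$ which, if that group injects into $\widehat{\pi}_1(Z)$, descends and exhibits the cover as étale; conversely, any finite étale cover of $Z_\reg$ has a normalisation in $Z$ which is quasi-étale over $Z$, and if every such normalisation is étale the cover extends, giving injectivity. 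So it suffices to produce a finite quasi-étale Galois morphism $\gamma\colon Y\to X$ such that every quasi-étale cover of $Y$ is étale.

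Next I would construct, starting from $Y_0=X$, a tower $\cdots\to Y_2\to Y_1\to Y_0=X$ of finite quasi-étale covers, each of which is Galois over $X$ and étale over $X_\reg$. Given $Y_n$ not yet satisfying the conclusion, the kernel $K_n:=\ker\big(\widehat{\pi}_1((Y_n)_\reg)\to\widehat{\pi}_1(Y_n)\big)$ is nontrivial; since $Y_n\to X$ is étale over $X_\reg$, \autoref{rem:codim2} (applied to étale fundamental groups) identifies $\widehat{\pi}_1((Y_n)_\reg)$ with an open normal subgroup of $\widehat{\pi}_1(X_\reg)$, so I can choose an open normal subgroup $L\trianglelefteq\widehat{\pi}_1(X_\reg)$ with $L\subseteq\widehat{\pi}_1((Y_n)_\reg)$ but $L\not\supseteq K_n$ (take any open normal $H\trianglelefteq\widehat{\pi}_1((Y_n)_\reg)$ with $H\not\supseteq K_n$ and pass to the intersection of its finitely many $\widehat{\pi}_1(X_\reg)$-conjugates). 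Let $Y_{n+1}$ be the normalisation of $X$ in the connected étale cover of $X_\reg$ attached to $L$: then $Y_{n+1}\to X$ is finite quasi-étale Galois and étale over $X_\reg$, and $Y_{n+1}\to Y_n$ is \emph{not} étale precisely because $L\not\supseteq K_n$. The whole statement reduces to the claim that this process terminates.

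Termination is the crux, and the only step needing a non-formal input. Since $Y_{n+1}\to Y_n$ is étale over $X_\reg$ and purity forbids branching inside a smooth locus, the branch locus of $Y_{n+1}\to Y_n$ lies in $(Y_n)_{\mathrm{sing}}\subseteq\gamma_n^{-1}(X_{\mathrm{sing}})$. I would stratify the closed set $X_{\mathrm{sing}}$ into finitely many irreducible locally closed strata along each of which the analytic-local type of $X$ is constant; for such a stratum $T$ with generic point $\eta_T$ and $t\in Y_n$ over $\eta_T$, the germ of $Y_n$ at $t$ is a connected étale cover of the germ of $X$ at $\eta_T$, so $\widehat{\pi}_1^{\,\mathrm{loc}}(Y_n,t)$ is a subgroup of $\widehat{\pi}_1^{\,\mathrm{loc}}(X,\eta_T)$, the latter being \emph{finite} by Xu's finiteness theorem. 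As $Y_n\to X$ is Galois, the groups $\widehat{\pi}_1^{\,\mathrm{loc}}(Y_n,t)$ for varying $t$ over $\eta_T$ are mutually conjugate, hence of a common order $o_n(T)$, and I would set $\nu(Y_n):=\prod_T o_n(T)$, a positive integer bounded above by $\prod_T\lvert\widehat{\pi}_1^{\,\mathrm{loc}}(X,\eta_T)\rvert<\infty$. Passing from $Y_n$ to $Y_{n+1}$ can only shrink these local groups, so $\nu(Y_{n+1})\le\nu(Y_n)$; and at the stratum into which the branch locus of $Y_{n+1}\to Y_n$ maps, the local group shrinks \emph{strictly}, because a ramified cover of a klt germ corresponds to a proper subgroup of its (finite) local fundamental group. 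Hence $\nu(Y_n)$ strictly decreases, the tower stops at some $Y_N$, and $Y:=Y_N$ together with $\gamma:=\gamma_N$ is the required maximally quasi-étale covering. I expect the real difficulty to be this last bookkeeping — turning ``finitely many strata, each with finite local fundamental group'' into an honest strictly decreasing invariant, in particular checking that the covers in the tower create no branching along strata not already accounted for — sitting on top of the one deep ingredient, Xu's local finiteness theorem.
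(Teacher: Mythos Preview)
The paper does not prove this statement at all: \autoref{thm:maximalquasietale} is quoted verbatim from \cite[Thm.~1.5]{grebkebpetduke} and used as a black box. There is therefore nothing in the present paper to compare your argument against.

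That said, your sketch is essentially the strategy of the original source. The reduction to ``every quasi-étale cover of $Y$ is étale'', the tower construction, and the termination via Xu's finiteness of local étale fundamental groups of klt singularities are all the ingredients of the proof in \cite{grebkebpetduke}. A few points where your write-up is loose: the stratification you invoke is not quite by ``constant analytic-local type'' but rather a Whitney stratification of the singular locus, and the decreasing invariant in the original is organised slightly differently (one works stratum by stratum, showing that after finitely many steps the cover becomes étale over each stratum in turn, rather than tracking a single product $\nu$). Your concern in the final sentence --- that new branching might appear over strata not yet controlled --- is exactly the subtlety the original handles by processing strata in order of increasing codimension, so that étaleness already established over lower strata is preserved. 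But the architecture of your argument is correct and matches the cited proof.
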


As already mentioned above, the main property of the existence of maximally quasi-étale coverings used later is that it allows us to pass from a representation of the fundamental group of the regular locus of a klt variety to the one of a projective variety.

\begin{prop}[{\cite[Sect.~8.1]{grebkebpetduke}}]
\label{prop:factorization}
Let $\gamma:Y\to X$ be a maximally quasi-étale covering. Then any linear representation $\rho:\pi_1(Y_\reg)\to \GL_n(\bC)$ factors through a representation  $\rho_Y:\pi_1(Y)\to \GL_n(\bC)$.
\end{prop}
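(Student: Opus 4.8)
The plan is to show that $\ker(i_*)\subseteq\ker(\rho)$, where $i_*\colon\pi_1(Y_\reg)\to\pi_1(Y)$ is the homomorphism induced by the inclusion of the smooth locus; granting this, $\rho$ descends to the quotient $\pi_1(Y_\reg)/\ker(i_*)$ and gives the desired $\rho_Y$. As preliminary observations I would record, first, that since $Y$ is normal and $Y_\reg$ is a dense Zariski-open subset, the map $i_*$ is surjective, so that $\pi_1(Y_\reg)/\ker(i_*)\cong\pi_1(Y)$; and second, that $\pi_1(Y_\reg)$ is finitely generated, being the fundamental group of a smooth quasi-projective variety, so that its image $\rho(\pi_1(Y_\reg))\subseteq\GL_n(\bC)$ is a finitely generated linear group.

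The heart of the matter is to convert the defining property of a maximally quasi-étale covering into a statement about $N:=\ker(i_*)$. Recall that the topological étale fundamental group is the profinite completion of the ordinary fundamental group and that this identification is functorial, so that $\widehat{i}_*\colon\widehat{\pi}_1(Y_\reg)\to\widehat{\pi}_1(Y)$ is the profinite completion of $i_*$. From the right-exactness of profinite completion applied to the short exact sequence $1\to N\to\pi_1(Y_\reg)\xrightarrow{i_*}\pi_1(Y)\to 1$, the kernel of $\widehat{i}_*$ equals the closure of the image of $N$ in $\widehat{\pi}_1(Y_\reg)$. Hence the hypothesis that $\widehat{i}_*$ is an isomorphism is equivalent to saying that $N$ has trivial image in every finite quotient of $\pi_1(Y_\reg)$, i.e.\ that $N$ is contained in every finite-index subgroup of $\pi_1(Y_\reg)$.

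To conclude I would invoke Malcev's theorem: a finitely generated linear group over a field of characteristic zero is residually finite, so $\bigcap_{K'}K'=\{e\}$ as $K'$ ranges over the finite-index normal subgroups of $H:=\rho(\pi_1(Y_\reg))$. For each such $K'$, the preimage $\rho^{-1}(K')$ is a normal subgroup of $\pi_1(Y_\reg)$ of index $[H:K']<\infty$, and therefore contains $N$ by the previous paragraph. Intersecting over all $K'$ gives $N\subseteq\bigcap_{K'}\rho^{-1}(K')=\rho^{-1}\big(\bigcap_{K'}K'\big)=\ker(\rho)$, which is exactly what we wanted; thus $\rho$ factors as claimed.

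The argument is essentially formal, and I do not expect a serious obstacle: the genuinely hard input is \autoref{thm:maximalquasietale}, which has already been cited. The only points in the above that need a word of care are the surjectivity of $i_*$ for the inclusion of the smooth locus of a normal variety, the identification of $\widehat{i}_*$ with the profinite completion of $i_*$, and the fact that preimages under $\rho$ of finite-index subgroups of the image are again of finite index in $\pi_1(Y_\reg)$; all of these are standard. (This reproduces the reasoning of \cite[Sect.~8.1]{grebkebpetduke} for the reader's convenience.)
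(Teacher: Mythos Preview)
Your argument is correct and follows essentially the same route as the paper and its source \cite[Sect.~8.1]{grebkebpetduke}: both reduce to showing $\ker(i_*)\subseteq\ker(\rho)$ via the profinite isomorphism $\widehat{i}_*$ together with Malcev's residual finiteness of finitely generated linear groups. Your write-up is in fact more explicit than the paper's one-line sketch $\rho_Y:=\widehat{\rho}\circ\widehat{i}_*^{-1}\circ c$, which tacitly relies on Malcev to make sense of ``$\widehat{\rho}$'' as a map landing back in $\GL_n(\bC)$.
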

In fact,  the representation $\rho_Y$ is defined by using that $\widehat{i}_*:\widehat{\pi}_1(Y_\reg)\to \widehat{\pi}_1(Y)$ is an isomorphism and setting $\rho_Y:=\widehat{\rho}\circ \widehat{i}_*^{-1}\circ c$, where $c:\pi_1(Y)\to \widehat{\pi}_1(Y)$ and where $\widehat{\rho}$ denotes the pro-finite completion of $\rho$.

\subsection{$G$-Higgs bundles over quasi-projective varieties}
\label{sec:Ghiggs}
For a full treatment of $G$-Higgs bundles we refer for example to \cite{km2}, \cite{MR3382029} and \cite{mochizuki_asterisque}. Here we fix notation and recall the notions that are most relevant for the subsequent discussion.

Let $X$ be a variety, let $X^\mathfrak{u}$ be its universal covering,  and let $\rho:\pi_1(X)\to G$ be a representation into a linear reductive algebraic group over $\bR$. 
We will denote by $P_G$ the associated holomorphic flat $G$-principal bundle over $X$. Let $K$ be a maximal compact subgroup of $G$. We will denote by $P_K\subset P_G$ a $K$-reduction, i.e. a $\cC^{\infty}$-subbundle such that $P_K\times_K G\cong P_G$. Such a reduction is equivalent to a section of $P_G\times_G \cY_G$, where $\cY_G=G/K$ is the associated symmetric space, which is again equivalent to a map $f_\rho:X^\mathfrak{u}\to \cY_G$ since $P_G\cong X^\mathfrak{u}\times_\rho G $ is flat. Such a reduction is called \emph{pluri-harmonic} if the map $f_\rho$ is pluri-harmonic. 
 
Let now $X$, as above, be a klt variety and $X_\reg$ its regular locus.
We recall now the existence of pluri-harmonic equivariant harmonic maps in the case of reductive representations of the fundamental group of quasi-projective varieties.

\begin{theorem}[{\cite[Prop.~A.18]{mochizuki_asterisque}}]
 \label{thm:mochizuki}   
 Let $\rho:\pi_1(X_\reg)\to G$ be a reductive representation into a linear reductive algebraic group over $\bR$ or $\bC$. Then there exists a unique tame purely imaginary pluri-harmonic reduction $P_K\subset P_G$ and so there is the corresponding $\rho$-equivariant pluri-harmonic map $f_\rho:(X_\reg)^\mathfrak{u}\to \cY_G$. 
\end{theorem}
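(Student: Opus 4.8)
The klt hypothesis is used here only to guarantee that $X_\reg$ is a \emph{smooth} quasi-projective variety, so it suffices to prove the statement with an arbitrary smooth quasi-projective variety $U$ in place of $X_\reg$. First I would fix a good compactification: by Nagata and Hironaka there is a smooth projective $\overline U\supseteq U$ whose boundary $D:=\overline U\setminus U$ is a reduced simple normal crossings divisor, and I would equip $U$ with a complete Kähler metric $\omega$ of Poincaré type along $D$ (on a polydisc chart in which $D=\{z_1\cdots z_k=0\}$, mutually bounded with $\sum_{i\le k}\frac{\i\,dz_i\wedge d\bar z_i}{|z_i|^2(\log|z_i|^2)^2}+\sum_{j>k}\i\,dw_j\wedge d\bar w_j$); such $\omega$ is complete of finite volume and is the natural metric for Hodge theory near $D$. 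Write $\mathbb V:=\mathbb V_\rho$ for the local system on $U$ attached to $\rho$: reductivity of $\rho$ means precisely that the Zariski closure of the monodromy group is reductive, equivalently that $\mathbb V$ is a direct sum of irreducible local systems. Fixing a faithful representation $G\hookrightarrow\GL_n$ one may work with the rank-$n$ flat bundle; the reduction to $K$ rather than $U(n)$ will be recovered at the end from uniqueness, since a harmonic metric for a monodromy contained in $G$ must be invariant under the relevant Cartan involution.

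The heart of the matter is the existence of a harmonic metric on $\mathbb V$ with the stated behaviour along $D$. When $\dim U=1$ this is Simpson's theorem on harmonic bundles over noncompact curves: for a semisimple local system on a punctured Riemann surface there is a harmonic metric, unique up to scale, that is tame — the associated Higgs field has at worst logarithmic poles at the punctures — and whose growth near each puncture is mutually bounded with an explicit model built from the parabolic filtration and the residue of $\nabla$; purely imaginary refers to Mochizuki's normalization of the residue eigenvalues, which is built in here because we start from the genuine flat bundle $\mathbb V_\rho$ rather than a $\lambda$-connection deformation. For $\dim U\ge 2$ one passes to the curve case by restriction: by a Mehta–Ramanathan-type theorem the parabolic Higgs (or flat) bundle one wishes to produce is $\mu_L$-polystable with vanishing parabolic Chern classes if and only if its restriction to a generic complete intersection curve is, and Mochizuki's Kobayashi–Hitchin correspondence for tame parabolic objects — proved by an Uhlenbeck–Yau/Simpson-style continuity method combined with the a priori estimates of his asymptotic analysis near $D$ — upgrades this to the existence of a tame pure imaginary pluri-harmonic metric on all of $U$. (The alternative analytic route is to run the Corlette–Jost–Zuo existence argument for equivariant harmonic maps on the complete finite-volume manifold $(U,\omega)$; reductivity of $\rho$ is exactly what prevents the harmonic map from escaping to the ideal boundary of $\cY_G$, while tameness and pure-imaginarity of the growth are extracted from elliptic estimates near $D$.)

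Granting existence, pluri-harmonicity and uniqueness are comparatively formal. Since $\cY_G=G/K$ is a Riemannian symmetric space of noncompact type it has nonpositive complexified sectional curvature, so Sampson's Bochner identity applies to the harmonic map $f_\rho:(X_\reg)^{\mathfrak u}\to\cY_G$ corresponding to the metric; on the Kähler manifold $(U,\omega)$ the relevant integrand is nonnegative, and tameness together with finiteness of $\vol_\omega(U)$ forces the boundary contributions near $D$ to vanish, whence $f_\rho$ is pluri-harmonic (the noncompact refinement of Sampson's theorem due to Jost–Zuo and Mochizuki). For uniqueness, decompose $\mathbb V$ into irreducibles and treat each separately: if $h_1,h_2$ are tame pure imaginary harmonic metrics on an irreducible summand, tameness makes the positive self-adjoint endomorphism $s=h_1^{-1}h_2$ and its inverse bounded on $U$, the standard Weitzenböck computation gives that $\Tr(s)+\Tr(s^{-1})$ is a bounded subharmonic function on the complete finite-volume manifold $(U,\omega)$ hence constant, which forces $s$ to be $\nabla$-parallel and to commute with the Higgs field, and irreducibility of the flat bundle then yields $s=c\cdot\id$; the remaining scalar is absorbed on passing to the reduction $P_K\subset P_G$ and is absent once one normalizes on the simple factors of $G$.

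The main obstacle is squarely the existence step in dimension $\ge 2$ with the correct asymptotics along $D$: producing simultaneously a harmonic metric at all for a representation that may have infinite energy with respect to every complete metric, tameness (only logarithmic blow-up of the Higgs field), and the pure-imaginary normalization of the residues of $\nabla$. This is precisely the technical core of Mochizuki's work; it cannot be shortcut, and a self-contained treatment would either import his Kobayashi–Hitchin correspondence for tame harmonic bundles wholesale or reprove the restriction-to-curves plus continuity-method package together with the asymptotic estimates of his two memoirs on the asymptotic behaviour of tame harmonic bundles.
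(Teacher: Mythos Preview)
The paper does not give its own proof of this statement: it is quoted verbatim as \cite[Prop.~A.18]{mochizuki_asterisque} and used as a black box, so there is no argument in the paper to compare against. What you have written is a reasonable high-level outline of the ideas that go into Mochizuki's proof (good SNC compactification, Poincar\'e-type metric, existence of the tame purely imaginary harmonic metric via the Kobayashi--Hitchin machinery or the Corlette--Jost--Zuo route, Sampson--Siu pluri-harmonicity, and the standard uniqueness argument via boundedness of $h_1^{-1}h_2$), and you are right that the substantive difficulty sits entirely in the existence step with the correct asymptotics along $D$.

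Two small points of care. First, your explanation of ``purely imaginary'' is slightly off: it is a condition on the eigenvalues of the residues of the Higgs field, not something that is automatic from working with a genuine flat bundle; rather, among all tame harmonic metrics on a semisimple flat bundle, the purely imaginary normalization is what singles out the canonical one and is needed for uniqueness. Second, your uniqueness argument produces $s=c\cdot\id$ on each irreducible summand, and a positive scalar does change the harmonic metric and hence the point in $\cY_G$; the actual uniqueness statement in Mochizuki is for the metric up to these obvious flat automorphisms (equivalently, uniqueness of the pluri-harmonic map up to the action of the centralizer of the monodromy in $K$), so the phrase ``the remaining scalar is absorbed on passing to the reduction $P_K\subset P_G$'' is not quite accurate. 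Neither point affects the overall shape of your sketch, but since the paper simply cites the result, your write-up goes well beyond what is required here.
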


For any faithful complex representation $G\to \GL(\bE)$, we can construct the holomorphic flat vector bundle $E_\reg:=P_{K_\bC} \times_{K_\bC} \bE$ associated to $P_G$ over $X_\reg$. The pluriharmonicity of $f_\rho$ ensures that there is a holomorphic structure on $E_\reg$ and a holomorphic Higgs field $\theta_\reg:E_\reg\to E_\reg\otimes \Omega^1_{X_\reg}$, so that we can define the Higgs bundle $(E_\reg,\theta_\reg)$ over $X_\reg$. The Higgs field $\theta_\reg$ can be constructed as a holomorphic $(1,0)$-form taking values in $P_{K_\bC}\otimes_{\Ad K_\bC} \mathfrak{p}_\bC\cong f_\rho^*(T_\bC\cY_G)$ and can be identified with the derivative of $f_\rho$ by using that $\mathfrak{p}_\bC$ can be seen as a subspace of $\End(\bE)$. 

We will not recall the full definition of \emph{tame purely imaginary}, since we will not need it (see \cite[Def.~A.6]{mochizuki_asterisque} for a full definition). This is a condition on the growth of the harmonic metric near the
complement $X\setminus X_\reg$ ensuring a well-defined theory. 
\begin{rem}
\label{rem:tame}
    If $(E_\reg,\theta_\reg)$ is the Higgs bundle associated to the harmonic bundle $P_K\subset P_G$, then by \cite[Lem.~A.12]{mochizuki_asterisque} $(E_\reg,\theta_\reg)$ is tame purely imaginary (as defined in \cite[Def.~3.5-3.6]{grebkebpetmathann}).  We will use this definition only implicitly in \autoref{thm:higgspoly}, where we cite \cite[Thm.~1.2]{grebkebpetmathann}, for which this property is important. 
\end{rem}

 Let $H\in \Div(X)$ be a nef $\bQ$-Cartier divisor with numerical class $[H] \in N^1(X)_\mathbb{Q}$, which we call a \emph{polarization}. Then, since $X$ is normal and hence non-singular in codimension one, following for example \cite[Sect.~4.1]{grebkebpetduke} and extending it slightly from the case of Cartier divisors to $\mathbb{Q}$-Cartier divisors, we define the \emph{degree} and \emph{slope} of a vector bundle $E_\reg$ \emph{with respect to $H$} to be 
\[\deg_H(E_\reg):=c_1(i_*(E_\reg))\cdot [H]^{d-1},\quad \mu_H(E_\reg):=\frac{\deg_H(E_\reg)}{\rk(E_\reg)}.\]
Note that we will mostly work with (big) semiample divisors $H$, for which the intersection number can be computed by restricting the vector bundle $E_{\reg}$ to a high degree complete intersection curve lying in the smooth locus of $X$, see~\cite[Rem.~4.5]{grebkebpetduke} for more details. If the polarization $H$ is clear, we will omit it from the notation.

We recall now the notions of \emph{stability} for the Higgs bundle $(E_\reg,\theta_\reg)$.
\begin{definition}\label{def:stability}
    Let $H\in \Div(X)$ be a nef $\bQ$-Cartier divisor. We say that $(E_\reg,\theta_\reg)$ is \emph{stable} with respect to $H$ if any  generically Higgs-invariant subsheaf $F_\reg\subset E_\reg$ with $0<\rk(F_\reg)<\rk(E_\reg)$ satisfies $\mu_H(F_\reg)<\mu_H(E_\reg).$
    Analogously, we define notions of \emph{semistable} and \emph{polystable}.
\end{definition} 

Note that also in the case of a representations of fundamental groups of quasi-projective varieties, the Toledo invariant does not change under deformations induced by the $\bC^*$-action that scales the Higgs field, see \cite[Thm.~10.1, Sec. 10]{mochizuki_asterisque} and recall that the Toledo invariant takes rational values. Moreover, since in the limit the Higgs bundle has the structure of a system of Hodge bundles (\cite[Thm.~10.5]{mochizuki_asterisque}), as in the classical case we can reduce the computation to this situation. This is the approach used in \cite{KMgentype} and by the previous discussion there are no obstructions to applying the same strategy in the quasi-projective situation.

We finally recall the special situation where $G=\SU(p,q)$ following \cite{km2}.
If $\bE=\bV\oplus \bW$ is the vector space where $G$ acts, then $K=S(U(p)\times U(q))$ preserves the decomposition and so $E_\reg=P_{K_\bC} \times_{K_\bC} \bE$ splits holomorphically as the direct sum of the rank $p$ subbundle $V_\reg=P_{K_\bC} \times_{K_\bC} \bV$  and the rank $q$ subbundle $W_\reg=P_{K_\bC} \times_{K_\bC} \bW$. Since the Higgs field $\theta_\reg$ is a holomorphic $(1,0)$-form taking values in $f^*(T_\bC\cY_G)=\Hom(W_\reg,V_\reg)\oplus\Hom(V_\reg,W_\reg)$, we can write $\theta$ as 
\[\begin{pmatrix}
0 & \beta_\reg\\
\alpha_\reg & 0 
\end{pmatrix}, \quad \begin{cases}
    \beta_\reg:W_\reg\to V_\reg\otimes \Omega^1_{X_\reg} \\
    \alpha_\reg:V_\reg\to W_\reg\otimes \Omega^1_{X_\reg} 
\end{cases}\]

Since 
\begin{equation}
\label{eq:kahlerrelation}
c_1(K_{\cY_G})=\frac{p+q}{4\pi}\omega_{\cY_G}, \quad f^*(T^{1,0}\cY_G)=\Hom(W_\reg,V_\reg)
\end{equation}  
we can write the Toledo invariant as
\begin{equation}
\label{eq:toledoinv}
\tau(\rho)=\frac{\deg_{K_X}(f^*K_{\cY_G})}{p+q}=-\frac{p\deg_{K_X}(W_\reg^\vee)+q\deg_{K_X}(V_\reg)}{p+q}=\deg_{K_X}(W_\reg)
\end{equation} 
where the last equality follows from $c_1(i_*(E_\reg))=0$ stated in \autoref{thm:higgspoly}.

A similar analysis is possible for the group $G=\SO_o(p,2)$ (see \cite[Sec. 4.2]{km2}) and can be used for the proof of the main result in the case of representations of the Spin groups $\Spin(p,2)$ which we will not directly treat here.

\subsection{Semistability results}

The two main ingredients in order to prove the main inequality are semistability results.
The first result needed in \cite{KMgentype} is a  theorem by Enoki (\cite{Enoki}) about the semistability of the tangent bundle on good models of varieties of general type. In our context of klt varieties, we need a generalization of the previous result by Guenancia, or more specifically the following version by Greb-Kebekus-Peternell-Taji that only assumes $K_X$ to be nef.

\begin{theorem}[\cite{guenanciass}, {\cite[Thm.~7.1]{GKPT}}]
\label{thm:guenanciass}
Let $X$ be a projective, klt variety of general type whose canonical divisor $K_X$ is nef. Then the tangent sheaf $T_X$ is semistable with respect to $K_X$, when considered as a Higgs bundle with trivial Higgs field on $X_\reg$, cf.~Definition~\ref{def:stability}.
\end{theorem}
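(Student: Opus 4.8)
The plan is to deduce the statement from the existence of a singular Kähler--Einstein metric, following Guenancia and \cite{GKPT}. Since $X$ is of general type, $K_X$ is big, and by hypothesis it is nef, so $K_X$ is nef and big. By the work of Eyssidieux--Guedj--Zeriahi on Monge--Ampère equations in big cohomology classes there is a closed positive current $\omega_{\mathrm{KE}}\in c_1(K_X)$ with bounded local potentials, having full Monge--Ampère mass $\int\omega_{\mathrm{KE}}^d=K_X^d$, that restricts to a genuine smooth Kähler--Einstein metric of negative curvature on a Zariski-open subset $X^{\circ}\subseteq X_\reg$ (the regular part of the ample locus of $K_X$), with $\mathrm{Ric}(\omega_{\mathrm{KE}})=-\omega_{\mathrm{KE}}$ there; equivalently one may pass to the canonical model $X_{\can}$ of \autoref{reminder}, where $K_{X_{\can}}$ is ample and the metric is a genuine Kähler--Einstein metric on all of $(X_{\can})_\reg$, and use the compatibility of slopes recalled in \autoref{sec:Ghiggs} (restriction to general complete-intersection curves avoiding the singularities and the exceptional locus). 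In either formulation, the induced Hermitian metric on $T_{X^\circ}$ is Hermite--Einstein: $\sqrt{-1}\,\Lambda_{\omega_{\mathrm{KE}}}\Theta\big(T_{X^\circ}\big)$ is a negative constant multiple of the identity, and $\mu_{K_X}(T_X)=-K_X^d/d$.

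Next I would run the Chern--Weil/second-fundamental-form argument. Let $\mathcal F\subseteq T_X$ be a saturated subsheaf with $0<r:=\rk\mathcal F<d$. Away from a proper analytic subset $Z$ of $X^{\circ}$, the sheaf $\mathcal F$ is a holomorphic subbundle of $T_X$, the $\omega_{\mathrm{KE}}$-orthogonal projection splits $T_X|_{U}=\mathcal F|_{U}\oplus\mathcal F^{\perp}|_{U}$ as $\mathcal C^{\infty}$-bundles on $U:=X^{\circ}\setminus Z$, and the second fundamental form is a smooth $(1,0)$-form $\sigma$ with values in $\Hom(\mathcal F,\mathcal F^{\perp})$. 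The Gauss--Codazzi identity gives $\Theta(\mathcal F)=\mathrm{pr}_{\mathcal F}\circ\Theta(T_X)|_{\mathcal F}-\sigma^{*}\wedge\sigma$. Taking the trace, wedging with $\omega_{\mathrm{KE}}^{d-1}$, integrating over $U$, and using the Hermite--Einstein identity together with $\int_{U}\omega_{\mathrm{KE}}^d=K_X^d$, one obtains up to fixed normalization constants
\[
\deg_{K_X}(\mathcal F)\;=\;r\cdot\mu_{K_X}(T_X)\;-\;\lVert\sigma\rVert_{L^2}^2\;\le\;r\cdot\mu_{K_X}(T_X),
\]
hence $\mu_{K_X}(\mathcal F)\le\mu_{K_X}(T_X)$, which is exactly the semistability asserted (and since the Higgs field is trivial there is nothing further to check about Higgs-invariance).

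The main obstacle is making this last identity rigorous, i.e. justifying that $\int_{U}\sqrt{-1}\,\mathrm{tr}\,\Theta(\mathcal F)\wedge\omega_{\mathrm{KE}}^{d-1}$ genuinely computes the algebraic degree $c_1(i_{*}\mathcal F)\cdot c_1(K_X)^{d-1}$, despite the singularities of $X$, the unboundedness of the curvature of $\omega_{\mathrm{KE}}$ near $X_{\sing}$ and near the non-ample locus of $K_X$, and the degeneration of $\mathcal F$ along $Z$. As in the treatments of Guenancia and \cite{GKPT}, this is handled by exploiting the boundedness of the Kähler--Einstein potentials (which yields $L^1$-control on the curvature and full-mass statements for the relevant currents), the smallness/pluripolarity of the exceptional loci ($X_{\sing}$, $X\setminus X^{\circ}$ and $Z$ all having the appropriate codimension), and a cut-off and exhaustion argument that discards the boundary contributions and simultaneously shows that $\lVert\sigma\rVert_{L^2}^2$ is finite with the stated nonnegative sign. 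Everything else in the argument is formal once these analytic facts are in place.
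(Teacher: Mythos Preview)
The paper does not give its own proof of this statement; it is quoted as a black box from the literature (Guenancia and \cite[Thm.~7.1]{GKPT}), so there is nothing in the paper to compare your argument against directly. That said, your sketch is an accurate outline of the proof found in those references: one constructs the singular K\"ahler--Einstein current in $c_1(K_X)$ (or works on $X_{\can}$ where $K$ is ample), observes that it makes $T_X$ Hermite--Einstein on a big Zariski-open set, and then runs the Kobayashi second-fundamental-form inequality for a saturated subsheaf; the only serious issue is the one you flag, namely that the curvature integral over this open set really computes the algebraic degree $c_1(i_*\mathcal F)\cdot[K_X]^{d-1}$, and this is exactly what the cited works establish using the bounded-potential and codimension/cut-off arguments you mention.
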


The second result needed in \cite{KMgentype} was shown in \cite[Lemma 4.2]{KMgentype}  and concerns the polystability of the Higgs bundle associated to a representation of the fundamental group of a smooth variety of general type.
This result can be replaced in our framework by the following.
\begin{theorem}
\label{thm:higgspoly}
Let $X$ be projective klt variety of general type with nef canonical bundle and of dimension greater than one. Let $\rho:\pi_1(X_\reg)\to G$ be a reductive representation into a linear reductive group. 
Then, the Higgs bundle $(E_\reg,\theta_\reg)$ on the regular locus associated to $\rho$ is $K_X$-polystable. Moreover $c_1(i_*(E_\reg))=0$, where $i:X_\reg\to X$ is the inclusion of the regular locus.
\end{theorem}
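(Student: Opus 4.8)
The plan is to reduce the statement to the corresponding known result over a smooth projective variety of general type, using the maximally quasi-étale covering $\gamma\colon Y\to X$ provided by \autoref{thm:maximalquasietale} together with the factorization in \autoref{prop:factorization}. First I would fix a faithful complex representation $G\hookrightarrow\GL(\bE)$ so that $(E_\reg,\theta_\reg)$ is an honest Higgs bundle on $X_\reg$ coming from a reductive linear representation $\rho\colon\pi_1(X_\reg)\to\GL_n(\bC)$; the $G$-structure is preserved throughout and so stability/semistability as a $\GL_n$-Higgs bundle is equivalent to the corresponding notion as a $G$-Higgs bundle (this is the standard reduction recalled in \autoref{sec:Ghiggs}, using that $G$-invariant reductions correspond to harmonic maps to $\cY_G$). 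Pulling back along $\gamma$ is harmless for (semi/poly)stability and for Chern classes up to the degree $\deg\gamma$, because $\gamma$ is finite quasi-étale: $K_Y=\gamma^*K_X$, so $\gamma^*$ respects the polarization, and a destabilizing Higgs subsheaf upstairs can be pushed forward/averaged over the Galois group to produce one downstairs, while $\gamma$ étale in codimension one means $c_1(i_*(\gamma^*E_\reg))=(\deg\gamma)\,\gamma^*c_1(i_*E_\reg)$ as classes on $Y$ intersected against $K_Y^{d-1}$. Hence it suffices to prove polystability and vanishing of $c_1$ for the Higgs bundle associated to $\rho_Y$ on $Y_\reg$, where by \autoref{prop:factorization} this representation extends to $\pi_1(Y)$.

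Next I would invoke the Simpson–Mochizuki correspondence in the klt setting, namely \cite[Thm.~1.2]{grebkebpetmathann} (cited here as part of the setup, cf.~\autoref{rem:tame}): the tame purely imaginary harmonic bundle attached to a reductive representation of $\pi_1(Y_\reg)$ corresponds to a $K_Y$-polystable Higgs sheaf on $Y$ with vanishing Chern classes $c_1=0$ and $\widehat c_2=0$ (the $\bQ$-Chern class condition). More precisely, the content of the Simpson correspondence in this context is a bijection between tame purely imaginary harmonic bundles on $Y_\reg$ and $K_Y$-polystable reflexive Higgs sheaves on $Y$ with $c_1=c_2=0$ in the appropriate sense; the Higgs bundle $(E_\reg,\theta_\reg)$ built from $\rho_Y$ (which is tame purely imaginary by \cite[Lem.~A.12]{mochizuki_asterisque}) is exactly the one matched to $\rho_Y$ under this correspondence, and therefore it is $K_Y$-polystable with $c_1(i_*(E_\reg))=0$. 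Polystability descends along the finite Galois map $\gamma$ (a polystable Higgs bundle stays polystable after pullback by a finite quasi-étale morphism and, conversely, polystability of $\gamma^*$ forces polystability of the original, by taking invariants), so the Higgs bundle associated to $\rho$ on $X_\reg$ is $K_X$-polystable; and $c_1(i_*E_\reg)\cdot K_X^{d-1}=\tfrac1{\deg\gamma}\,c_1(i_*\gamma^*E_\reg)\cdot K_Y^{d-1}=0$, which is the Chern class statement because here "$c_1(i_*E_\reg)=0$" is understood as an intersection-theoretic statement against the polarization (equivalently, one may restrict to a complete intersection curve in $X_\reg$ as in \cite[Rem.~4.5]{grebkebpetduke} and note that the restriction is a polystable Higgs bundle of degree zero on a smooth curve).

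The one point that needs genuine care — and which I expect to be the main obstacle — is making sure that the harmonic/Higgs bundle one obtains after the reduction step really is the one to which the klt Simpson correspondence of \cite{grebkebpetmathann} applies, i.e.~that the \emph{tame purely imaginary} condition is preserved under all the manipulations (pullback along $\gamma$, extension from $\pi_1(Y_\reg)$ to $\pi_1(Y)$, restriction of scalars to view $G\subset\GL_n$). For the pullback this is standard since $\gamma$ is finite and quasi-étale, so the local monodromy behaviour at the boundary is unchanged; for the factorization through $\pi_1(Y)$ one uses that $\rho_Y$ is defined purely group-theoretically via the profinite comparison $\widehat i_*$ and that the associated flat bundle on $Y_\reg$ is literally the pullback $i^*$ of the flat bundle on $Y$, which has empty polar locus, so it is trivially tame purely imaginary; and \cite[Lem.~A.12]{mochizuki_asterisque} handles the compatibility with the linear embedding. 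Once this bookkeeping is in place, the two assertions — $K_X$-polystability and $c_1(i_*E_\reg)\cdot K_X^{d-1}=0$ — follow formally, and this is precisely the ingredient (\autoref{thm:higgspoly}) that replaces \cite[Lemma~4.2]{KMgentype} in the proof of the Milnor–Wood inequality \autoref{thm:MWinequality}.
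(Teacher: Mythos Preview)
Your reduction via the maximally quasi-\'etale cover $\gamma\colon Y\to X$ does not address the actual obstacle, and the step where you invoke \cite[Thm.~1.2]{grebkebpetmathann} on $Y$ fails as written. That theorem is stated for a klt variety with \emph{ample} polarisation; see the first sentence of the paper's own proof (``If $K_X$ is ample, the claim was then shown in \cite[Thm.~1.2]{grebkebpetmathann}''). Passing to $Y$ gives $K_Y=\gamma^*K_X$, which is still only big and nef, so you are in exactly the same situation you started in and cannot apply the cited result directly. The factorisation through $\pi_1(Y)$ is also a red herring here: \cite[Thm.~1.2]{grebkebpetmathann} already takes as input a representation of $\pi_1(X_\reg)$ for klt $X$, so nothing is gained by first extending to $\pi_1(Y)$; and the ``tame purely imaginary'' bookkeeping you worry about is not the issue at all, since \autoref{rem:tame} already settles it on $X$ itself.

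The paper's proof instead fixes the polarisation, not the representation: one passes to the canonical model $q_X\colon X\to X_{\can}$, where $K_{X_{\can}}$ is ample and $K_X=q_X^*K_{X_{\can}}$. Using that $q_X$ is an isomorphism over a big open $U\subset X_{\can,\reg}$ whose complement has codimension $\geq 2$, the representation of $\pi_1(X_\reg)$ is induced (via \autoref{rem:codim2}) from one of $\pi_1(X_{\can,\reg})$. Now \cite[Thm.~1.2]{grebkebpetmathann} applies on $X_{\can}$, giving $K_{X_{\can}}$-polystability and $c_1=0$ there; pulling back along $q_X$ and comparing degrees on complete intersection curves lying in $U$ transfers both conclusions to $X$ with respect to $K_X$. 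If you want to salvage your approach you would have to insert this canonical-model step on $Y$ anyway, at which point the detour through $\gamma$ becomes entirely superfluous.
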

\begin{proof}
By Remark \ref{rem:tame}, the Higgs bundle associated to the representation is tame purely imaginary. Moreover, since by assumption the representation is reductive, the associated flat bundle is semisimple. If $K_X$ is ample, the claim was then shown in \cite[Thm. 1.2]{grebkebpetmathann}. If $K_X$ is only big and nef, then $K_X$ is semiample and it induces a birational morphism to the canonical model $q:X \to X_\can$, where $K_{X_\can}$ is ample and  $K_X = q^*(K_{X_\can})$, see Reminder \ref{reminder}. If we define $U=X_{\can,\reg}\setminus q(X_{\sing} \cup \mathrm{Exc}(q))$ to be the complement of the image of the singular locus of $X$ and the exceptional set of $q$ in the regular locus of $X_\can$, then $q$ induces an isomorphism between $U\subseteq X_{\can,\reg}$ and $q^{-1}(U)\subseteq X_\reg$, where the complement of the first subset has codimension at least two. By Remark \ref{rem:codim2},  we have the following natural identifications and morphisms
\[\pi_1(X_{\can,\reg}) \cong \pi_1(U) \cong \pi_1(q^{-1}(U)) \rightarrow \pi_1(X_\reg) ,\] where the last arrow is induced by the open inclusion $q^{-1}(U)\hookrightarrow X_\reg$.
Denote by $(E_{\can,\reg},\theta_{\can,\reg})$ the Higgs bundle on $X_{\can,\reg}$ induced by the composed map. Then $(E_\reg,\theta_\reg)\cong q^*(E_{\can,\reg},\theta_{\can,\reg})$ on $q^{-1}(U)$. Since $K_{X_\can}$ is ample, $(E_{\can,\reg},\theta_{\can,\reg})$ is $K_{X_\can}$-polystable and so the same is then true for its pull-back $(E_\reg,\theta_{\reg})$ with respect to $[K_X]=[q^*K_{X_\can}]$; indeed, a general high degree complete intersection curve for $K_{X_\can}$ will lie in $U$ and will be the isomorphic image of a general high degree complete intersection curve for $q^*K_{X_\can}$ contained in $q^{-1}(U)$. 
\end{proof}

	
	\section{The main result}
	\label{sec:main}

In this section we prove \autoref{thm:intro}. We will follow the proof of  the Milnor-Wood inequality in the case of representations of fundamental groups of smooth varieties of general type presented in \cite{KMgentype}.  As in \emph{loc.~cit.}, we will split the proof into the inequality part, which requires the rank of $G$ to be at most two, and the equality situation, which does not require the rank assumption.

\subsection{The inequality}

We show here the klt version of \cite[Prop.~4.3]{KMgentype}.
The two main ingredients for proving the inequality are the semistability results \autoref{thm:guenanciass} and \autoref{thm:higgspoly} for klt varieties. We will use the notation introduced at the end of Section~\ref{sec:Ghiggs}. 

\begin{theorem}
\label{thm:MWinequality}
Let $X$ be a   projective klt variety  of general type of dimension $d\geq 2$ with nef canonical divisor $K_X$. Let $G$ be either $\SU(p,q)$ with $1\leq q\leq 2\leq  p$, $\Spin(p,2)$ with $p\geq 3$ or $Sp(2,\bR)$. Finally let $\rho:\pi_1(X_\reg)\to G$ be a reductive representation. Then the Milnor-Wood type inequality
\begin{equation}\label{eq:specialMWinequ}
|\tau(\rho)|\leq \rk(G)\cdot \frac{K_{X}^d}{d+1}\end{equation}
holds.
Moreover, if $\tau(\rho)$ is maximal, then $G=\SU(p,q)$ with $p\geq dq$ and  we have $\deg(W_\reg)=-\deg(\Im(\beta_\reg))$ and $\beta_\reg:W_\reg\otimes T_{X_\reg}\to V_\reg$ is generically injective, where $E_\reg=W_\reg\oplus V_\reg$ is the splitting induced by the $\SU(p,q)$ structure and $\beta_\reg$ is one of the corresponding parts of the Higgs field. 
\end{theorem}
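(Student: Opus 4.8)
The plan is to mimic the Higgs-bundle argument of Koziarz--Maubon \cite[Prop.~4.3]{KMgentype}, replacing the two inputs (Enoki semistability and polystability of the associated Higgs bundle) by their klt analogues \autoref{thm:guenanciass} and \autoref{thm:higgspoly}, and checking that every intersection-theoretic computation can be carried out on a general complete intersection curve lying in $X_\reg$ so that the singularities cause no trouble. After the standard reduction (recalled at the end of \autoref{sec:Ghiggs}) to the case of a system of Hodge bundles via the $\bC^*$-action scaling the Higgs field, one treats the three groups in turn. First I would do the model case $G=\SU(p,q)$ with $1\le q\le 2\le p$: here $E_\reg=V_\reg\oplus W_\reg$ with $\rk V_\reg=p$, $\rk W_\reg=q$, $\theta_\reg$ has off-diagonal blocks $\alpha_\reg\colon V_\reg\to W_\reg\otimes\Omega^1_{X_\reg}$ and $\beta_\reg\colon W_\reg\to V_\reg\otimes\Omega^1_{X_\reg}$, and by \eqref{eq:toledoinv} we have $\tau(\rho)=\deg_{K_X}(W_\reg)$. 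Dualizing $\beta_\reg$ gives a map $\beta_\reg\colon W_\reg\otimes T_{X_\reg}\to V_\reg$; let $N_\reg=\ker$ and $I_\reg=\Im$ be its kernel and image as subsheaves (saturated), so $W_\reg\otimes T_{X_\reg}$, resp.\ $V_\reg$, receives a generically Higgs-invariant-type filtration.

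The heart of the argument is a slope inequality obtained by playing the $K_X$-semistability of $T_X$ (\autoref{thm:guenanciass}) against the $K_X$-polystability of $(E_\reg,\theta_\reg)$ with $c_1(i_*E_\reg)=0$ (\autoref{thm:higgspoly}). Concretely: $W_\reg$ is a Higgs subsheaf only when $\beta_\reg$ vanishes, which is the degenerate case; in general one uses that $\ker\alpha_\reg\oplus V_\reg$ and similar subsheaves are Higgs-invariant, and that $V_\reg\subset E_\reg$ is $\theta_\reg$-invariant modulo $\beta_\reg$, to bound $\deg W_\reg$ from above. The key numerical step is to estimate $\deg_{K_X}(\Im(\beta_\reg))$: since $\beta_\reg$ factors through $W_\reg\otimes T_{X_\reg}\twoheadrightarrow I_\reg\hookrightarrow V_\reg$, semistability of $T_X$ gives $\mu_{K_X}(I_\reg)\le \mu_{K_X}(W_\reg\otimes T_{X_\reg})=\mu_{K_X}(W_\reg)+\mu_{K_X}(T_X)$, while the subsheaf $I_\reg\subset V_\reg$ together with $W_\reg\subset E_\reg$ (and $c_1(i_*E_\reg)=0$) forces a lower bound on $\deg W_\reg$ via polystability of $E_\reg$ applied to the Higgs-invariant saturation of $W_\reg\oplus I_\reg$. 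Combining these and using $\mu_{K_X}(T_X)=-K_X^d/d$ (from $c_1(T_X)=-K_X$ and $\rk T_X=d$) and $\mu_{K_X}(\Omega^1_X)=K_X^d/d$, together with the rank constraint coming from $\rk I_\reg\le \min(q\cdot d,\,p)$, yields $|\tau(\rho)|=|\deg_{K_X}W_\reg|\le q\cdot K_X^d/(d+1)=\rk(\SU(p,q))\cdot K_X^d/(d+1)$, with the $\frac{1}{d+1}$ appearing exactly as in \cite{KMgentype} from optimizing the rank weights. Tracing the equality conditions back: one needs $\rk I_\reg=qd$ (forcing $p\ge qd$), $I_\reg=\Im(\beta_\reg)$ of full expected rank so that $\beta_\reg\colon W_\reg\otimes T_{X_\reg}\to V_\reg$ is generically injective, and $\deg W_\reg=-\deg\Im(\beta_\reg)$; the cases $\Spin(p,2)$ and $Sp(2,\bR)$ are handled by the analogous split of $E_\reg$ (for $\Spin$ see \cite[Sec.~4.2]{km2}) and one checks the resulting bound is strict, i.e.\ maximality cannot occur there, which is why the maximal case lands in $\SU(p,q)$. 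The rank $\le 2$ hypothesis enters precisely here: for $q\le 2$ the subsheaves produced from $\beta_\reg$ and $\alpha_\reg$ are few enough that the semistability estimates close up, whereas for higher rank one would need finer information about the iterated Hodge filtration.

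The main obstacle I anticipate is bookkeeping the degrees across the birational morphism $q_X\colon X\to X_\can$ and the codimension-$\ge 2$ loci: one must be sure that "general high-degree complete intersection curve for $K_X$" avoids $X_\sing$, that restricting $E_\reg$, $V_\reg$, $W_\reg$, $T_X$ and their saturations to such a curve computes all the relevant intersection numbers (invoking \cite[Rem.~4.5]{grebkebpetduke}), and that the saturation operations $N_\reg,I_\reg\subset$ ambient sheaves commute with this restriction. A second delicate point is that $f_\rho^*\omega_{\cY_G}$ is only a priori a current/class on $X_\reg$ pushed to $X$, so the identity $\tau(\rho)=\deg_{K_X}W_\reg$ of \eqref{eq:toledoinv} and the vanishing $c_1(i_*E_\reg)=0$ must be used exactly in the normalized form supplied by \autoref{thm:higgspoly}; once those are in place, the inequality is a finite-dimensional optimization over the admissible ranks, identical in structure to the smooth case, and the equality analysis is bookkeeping of when each estimate is sharp.
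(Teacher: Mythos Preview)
Your approach is essentially identical to the paper's: replace Enoki by \autoref{thm:guenanciass}, replace the smooth polystability input by \autoref{thm:higgspoly}, run the Koziarz--Maubon slope computation on a general complete intersection curve in $X_\reg$, and deform to a system of Hodge bundles via the $\bC^*$-action for the $q=2$ cases. The equality analysis and the exclusion of $\Sp(2,\bR)$ and $\Spin(p,2)$ are also handled the same way.

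One genuine slip to fix: you write that semistability of $T_X$ gives $\mu_{K_X}(I_\reg)\le \mu_{K_X}(W_\reg\otimes T_{X_\reg})$, but $I_\reg=\Im(\beta_\reg)$ is a \emph{quotient} of $W_\reg\otimes T_{X_\reg}$, so semistability gives the opposite inequality. The paper (and \cite{KMgentype}) instead uses the subsheaf inequality $\mu(\ker\beta_\reg)\le\mu(W_\reg\otimes T_{X_\reg})$, then combines $\deg(W_\reg\otimes T_{X_\reg})=\deg(\ker\beta_\reg)+\deg(\Im\beta_\reg)$ with the polystability bound $\deg(W_\reg)+\deg(\Im\beta_\reg)\le 0$ (coming from the Higgs subsheaf $W_\reg\oplus\Im\beta_\reg$) to obtain the upper bound on $\deg W_\reg$. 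Also note that for $q=1$ the tensor product $W_\reg\otimes T_{X_\reg}$ is semistable simply because $W_\reg$ is a line bundle, whereas for $q=2$ this fails in general, which is precisely why the reduction to a system of Hodge bundles is needed there rather than being a uniform preliminary step. Once these directions are straightened out, your write-up matches the paper's proof.
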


As remarked right after the main theorem in the introduction of \cite{KMgentype}, the statement is given in terms of groups whose complexification is simply connected, since in this case it is possible to define the Toledo invariant as the degree of a line bundle. However, \autoref{thm:MWinequality} implies the analogous result for all classical Hermitian Lie groups of rank $\rk(G)\leq 2$ apart from $G=\SO^*(10)$, as stated in the introduction, see \autoref{thm:intro}. This follows from \cite[Lemma 2.2]{KMgentype}, which implies that the main result is true for any representation in a quotient by a finite normal subgroup of any of the groups $G$ listed in \autoref{thm:MWinequality}, so for any classical Hermitian Lie groups of rank less than three apart from $G=\SO^*(10)$.

\begin{proof}[Proof of \autoref{thm:MWinequality}]
The proof proceeds along the same lines as the one in the smooth case \cite[Prop.~4.3]{KMgentype}, where we substitute the original ingredients by their respective generalizations, namely \autoref{thm:guenanciass} and \autoref{thm:higgspoly}. The proof uses a case-by-case analysis depending on the type of the group $G$. We recall how it works in the special case $G=\SU(p,1)$ to show how to use the generalized results in the klt setting, and refer to the original source for the full discussion.
\par
Recall from \autoref{sec:Ghiggs} that for $G=\SU(p,q)$ the Higgs bundle splits as $E_\reg=V_\reg\oplus W_\reg$ and the Higgs field splits as $\theta_\reg=\alpha_\reg\oplus \beta_\reg$. 
Moreover, by \eqref{eq:toledoinv} we have $\tau(\rho)=\deg(W_\reg)$, so in this case we need to show that 
\begin{equation}\label{eq:needtoshowInequality}
    \left|\deg_{K_X}(W_\reg)\right|\leq q\frac{K_{X}^d}{d+1}
\end{equation} We will omit the subscript when computing degrees for the remainder of this proof.

\par
Consider the case of a reductive representation with $G=\SU(p,1)$. Since $T_X$  is semistable by \autoref{thm:guenanciass} and $W_\reg$ a line bundle,  the tensor product $T_{X_\reg}\otimes W_\reg$ is semistable. Hence, interpreting $\beta_\reg$ as a map from $W_\reg\otimes T_{X_\reg}$ to $V_\reg$ we obtain a first inequality, \[\mu(\ker_{\beta_\reg})\leq \mu(W_\reg\otimes T_{X_\reg}).\] Moreover, since $ \Im(\beta_\reg) \oplus W_\reg$ is a  Higgs subsheaf of $(E_\reg,\theta_\reg)$, semistability of the latter Higgs bundle, \autoref{thm:higgspoly}, implies \[\deg(W_\reg\oplus \Im(\beta_\reg))\leq 0.\] Putting these preliminary considerations together we obtain 
\begin{align*}
\deg(W_\reg\otimes T_{X_\reg})&=\deg(\Im(\beta_\reg)) + \deg(\ker_{\beta_\reg}) \\
&\leq \frac{\rk(\ker_{\beta_\reg})}{d}\deg(W_\reg\otimes T_{X_\reg})-\deg(W_\reg).
\end{align*}
Noting that both $W_\reg$ and $T_{X_\reg}$ are locally free along any complete intersection curve for $K_X$ used to compute the degrees, so that tensor products work as expected, we get
\[\deg(W_\reg)\leq \frac{d-\rk(\ker_{\beta_\reg})}{d-\rk(\ker_{\beta_\reg})+1}\frac{\deg(K_X)}{d}.\]
Using finally that $\rk(\Im_{\beta_\reg})=d-\rk(\ker_{\beta_\reg})\leq d$, we arrive at the upper bound of the desired inequality \eqref{eq:needtoshowInequality} (recall that we are in the special case where $q=1$).
Moreover, equality is possible only if $\rk(\ker_{\beta_\reg})=0$ and $\deg(W_\reg\oplus \Im(\beta_\reg))=0$; furthermore, $\rk(\ker_{\beta_\reg})=0$ implies the restriction $p\geq qd$ imposed in the statement of \autoref{thm:MWinequality}.
\par
For $G=\SU(p,2)$ the arguments are similar, but they have to be applied to a deformation of $E_\reg$ given by a system of Hodge bundles, which can be found as a fixed point of  the $\bC^*$-action that scales the Higgs field $\theta_\reg$.

This also settles the case $G=\Sp(2,\bR)\subset \SU(2,2)$.  Indeed,  maximal representations are impossible for $G=\Sp(2,\bR)$, since in this case the inequality $p\geq qd$ derived in a previous step cannot be achieved with $d\geq 2$. In case $G=\Spin(p,2)$, the situation is similar, that is, deforming to a system of Hodge bundles one obtains a stronger bound that makes the maximal situation impossible. We refer to \cite{km2} for the full treatment of the previously mentioned cases.

Note finally that the proof of the lower bound of the Milnor-Wood inequality \autoref{eq:specialMWinequ} follows using the same strategies by considering the dual of the Higgs bundle $(E_\reg,\theta_\reg)$. 

\end{proof}

\begin{rem}[Nonreductive representations]\label{rem:nonred}
In principle, the Toledo invariant can be defined for an arbitrary representation $\rho: \pi_1(X_{\text{reg}})\longrightarrow G$ into a Hermitian Lie group, by using in \eqref{eq:MWinequ} any smooth $\rho$-equivariant map $f: (X_{\text{reg}})^\mathfrak{u}\to \mathcal{Y}_G$ arising from the choice of a background metric and the representation $\rho$.  Since any two such maps are homotopic, the Toledo invariant does not depend on this choice. In case $\rho$ is reductive, we may use the harmonic map coming from Mochizuki's work, and hence for reductive representations this produces the same number $\tau(\rho)$ as before. Moreover, as explained in \cite[Sect.~3.3.3 / 4.2]{km2} the inequality \eqref{eq:specialMWinequ} for non-reductive representations follows from the inequality for reductive ones, while equality is never achieved for non-reductive representations into the groups considered here; in particular, if $G= \mathrm{PU}(p,1)$, then nonreductive representations have vanishing Toledo invariant,  see~\cite[Lem.~3.4]{km0}.
\end{rem}

\subsection{The equality case}

In this section we will show the uniformization statement of our main result, Theorem~\ref{thm:intro}. Again, this leads to the two cases of $\tau(\rho)$ being maximal or minimal. We will deal with the first case in detail by proving the subsequent \autoref{thm:equality}, which is the equivalent of \cite[Prop.~4.4]{KMgentype} and takes into account the information provided by \autoref{thm:MWinequality}. We will comment on the changes necessary to cover the second case directly after the proof, thus concluding the proof of \autoref{thm:intro}. 

\begin{theorem}
\label{thm:equality}
Let $X$ be a  projective klt variety  of general type with nef canonical bundle of dimension $d\geq 2$, and let $X_{\text{reg}}\subseteq X$ be its regular locus. Let $\rho:\pi_1(X_\reg)\to \SU(p,q)=G$ with $p\geq qd$ and $q\geq 1$ and $(E_\reg=V_\reg\oplus W_\reg,\theta_\reg=\beta_\reg\oplus \alpha_\reg)$ be its associated Higgs bundle. Assume that $\tau(\rho)$ is maximal, that is, $\deg(W_\reg)=q\frac{K_{X}^d}{d+1}$, that we have $\deg(W_\reg)=-\deg(\Im(\beta_\reg))$, and that $\beta_\reg:W_\reg\otimes T_{X_\reg}\to V_\reg$ is generically injective. Then, the canonical model $X_{\can}$ of $X$ is the quasi-étale quotient of a smooth ball quotient $Z$ by a finite group. Moreover,  the induced $\rho_Z$-equivariant harmonic map $f_Z:Z^\mathfrak{u}\to \cY_G$ is a holomorphic  proper embedding from the universal cover of $Z$  onto a totally geodesic copy of complex hyperbolic $d$-space.
\end{theorem}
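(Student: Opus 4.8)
The plan is to reduce to a maximally quasi-étale cover on which the Higgs-bundle argument of Koziarz--Maubon \cite[Prop.~4.4]{KMgentype} applies --- with \autoref{thm:guenanciass} and \autoref{thm:higgspoly} replacing Enoki's theorem and the polystability statement of \emph{loc.~cit.} --- and then to feed the resulting Miyaoka--Yau equality into the uniformization theorem for klt spaces of \cite{GKPT}.

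First I would replace $X$ by its canonical model. As in the proof of \autoref{thm:higgspoly}, the canonical morphism $q_X:X\to X_\can$ restricts to an isomorphism over an open $U\subseteq X_{\can,\reg}$ with complement of codimension at least two, so $(E_\reg,\theta_\reg)$ is the pull-back of the Higgs bundle on $X_{\can,\reg}$ attached to a reductive representation $\rho_\can:\pi_1(X_{\can,\reg})\to G$; since $K_X=q_X^*K_{X_\can}$ and all relevant degrees can be computed on complete intersection curves chosen inside $U$, the three maximality hypotheses --- maximality of $\tau$, the identity $\deg(W_\reg)=-\deg(\Im\beta_\reg)$, and generic injectivity of $\beta_\reg$ --- pass to $\rho_\can$. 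Hence I may assume $K_X$ ample. Next let $\gamma:Z\to X$ be a maximally quasi-étale Galois cover with group $G_0$ from \autoref{thm:maximalquasietale}; then $K_Z=\gamma^*K_X$ is ample, $Z$ is klt, and restriction of $\rho$ along the finite-index inclusion $\pi_1(Z_\reg)\hookrightarrow\pi_1(X_\reg)$ induced by $\gamma$ yields a reductive representation $\rho_Z$ with Higgs bundle $\gamma^*(E_\reg,\theta_\reg)$. Since $K_Z^d=\deg(\gamma)\,K_X^d$ and $\deg_{K_Z}(\gamma^*F)=\deg(\gamma)\,\deg_{K_X}(F)$, the three maximality hypotheses transfer once more, and by \autoref{prop:factorization} the representation $\rho_Z$ descends along the surjection $\pi_1(Z_\reg)\twoheadrightarrow\pi_1(Z)$ to a representation of $\pi_1(Z)$, still with image in the algebraic group $G$.

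On $Z$ I would then run the argument of \cite[Prop.~4.4]{KMgentype}. After deforming $\theta_Z$ to a system of Hodge bundles via the $\bC^*$-action --- which does not affect the Toledo invariant --- semistability of $T_Z$ with respect to $K_Z$ (\autoref{thm:guenanciass}) and polystability of the slope-zero Higgs bundle with $c_1(i_*E_{Z,\reg})=0$ (\autoref{thm:higgspoly}) force $W_{Z,\reg}\oplus\Im\beta_{Z,\reg}$ to be a degree-zero Higgs subsheaf, hence a polystable locally free direct summand; generic injectivity of $\beta_{Z,\reg}$ then promotes $\Im\beta_{Z,\reg}$ to a subbundle isomorphic to $W_{Z,\reg}\otimes T_{Z_\reg}$, the $\alpha$-part of the Higgs field vanishes so that the harmonic map is holomorphic, and the complement of $\Im\beta_{Z,\reg}$ inside $V_{Z,\reg}$ is a polystable degree-zero Higgs bundle with vanishing Higgs field, i.e.\ a flat unitary factor. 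This identifies $(E_{Z,\reg},\theta_{Z,\reg})$ with the uniformizing system of Hodge bundles of a ball quotient twisted by that unitary factor, gives $c_1(W_{Z,\reg})=\tfrac{q}{d+1}\,c_1(K_Z)$, and yields the Miyaoka--Yau equality for the Chern numbers of $Z$. By the uniformization theorem for klt spaces \cite{GKPT}, the maximally quasi-étale space $Z$ is then a \emph{smooth} ball quotient; in particular $\pi_1(Z)=\pi_1(Z_\reg)$, $\rho_Z$ is --- up to the flat unitary factor --- the tautological inclusion $\pi_1(Z)\hookrightarrow\SU(d,1)\hookrightarrow G$, and the equivariant harmonic map $f_Z:Z^\mathfrak{u}\to\cY_G$ is the holomorphic proper embedding onto the totally geodesic copy $\bH^d_\bC=\cY_{\SU(d,1)}$ in $\cY_G$ of induced holomorphic sectional curvature $-1/q$. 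Since $\gamma:Z\to X=X_\can$ exhibits $X_\can$ as the quasi-étale quotient $Z/G_0$, this is the assertion.

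The part I expect to be delicate is not the differential geometry, which parallels \cite{KMgentype}, but the bookkeeping of all data --- the Higgs bundle, the canonical polarizations, the intersection numbers and above all the three maximality conditions --- through the two passages $X\rightsquigarrow X_\can$ and $X_\can\rightsquigarrow Z$ across subsets of codimension at least two, and in checking that the descent of $\rho_Z$ to $\pi_1(Z)$ provided by \autoref{prop:factorization} is compatible with the Higgs-bundle picture and that the $\bC^*$-deformation to a system of Hodge bundles remains legitimate in the quasi-projective klt setting, for which one invokes the $\bC^*$-invariance and rationality of the Toledo invariant recalled in \autoref{sec:Ghiggs}.
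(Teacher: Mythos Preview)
Your route differs from the paper's. The paper takes the maximally quasi-\'etale cover $Y\to X$, passes to its canonical model $Y_\can$, \emph{resolves} to a smooth $\widetilde{Y}_\can$, transfers the maximality data there (\autoref{prop:MWres} and \autoref{cor:compatibilities}), and then runs a variant of \cite[Prop.~4.4]{KMgentype} on the smooth resolution (\autoref{prop:coverball}): the harmonic map is shown to be holomorphic, to be an immersion off the exceptional locus, and to land in a totally geodesic $\bB^d\subset\cY_G$; it then descends to a local biholomorphism $Y_\can^\mathfrak{u}\to\bB^d$, and purity of the branch locus forces $Y_\can$ to be smooth. A separate lemma (\autoref{lem:canonicalmodels}) shows $X_\can$ is a quasi-\'etale quotient of $Y_\can$. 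No appeal to a Miyaoka--Yau-type uniformization theorem is made. You instead pass to $X_\can$ first, take a maximally quasi-\'etale cover $Z\to X_\can$, stay on the singular klt space $Z$, and aim to extract the Miyaoka--Yau equality from the Higgs-bundle decomposition on $Z_\reg$ and then to invoke \cite{GKPT}.

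Your sketch has gaps. The $\bC^*$-deformation to a system of Hodge bundles is out of place: the three hypotheses concern the \emph{original} Higgs bundle, and in \cite{KMgentype} the deformation enters only in the proof of the inequality, not in the equality case. More seriously, the claim that the complement $U$ of $W\oplus\Im\beta$ is a ``flat unitary factor'' does not follow from polystability alone --- you obtain $U$ as a degree-zero polystable direct summand with trivial Higgs field, but ``flat'' requires vanishing of $c_1(U)$ and of $\ch_2(U)\cdot K_Z^{d-2}$, not merely of the $K_Z$-slope. Without this, neither the identity $c_1(W)=\tfrac{q}{d+1}c_1(K_Z)$ (as an equality of classes, not just of degrees) nor the Miyaoka--Yau equality follows; and even granting those, you would still need to match your Chern-class computation on $Z_\reg$ with the $\bQ$-Chern classes entering the uniformization theorem of \cite{GKPT}, and to argue that a maximally quasi-\'etale space satisfying that equality is already smooth. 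The paper's resolution-based argument sidesteps all of this: on the smooth $\widetilde{Y}_\can$ the harmonic-map analysis of \cite{KMgentype} applies directly, and smoothness of $Y_\can$ is obtained geometrically rather than via a Chern-number criterion.
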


In order to analyze the equality case, we use information on how the harmonic metric on $X_\reg$ arises and compares to such metrics on associated covers and resolutions, cf. the proof of \cite[Thm.~1.2]{grebkebpetmathann}. The situation is summarized by the following diagram, whose notation will be discussed in the subsequent paragraph: 

\begin{center}
\begin{tikzcd}\label{bigdiagram}
&\cY_G&\widetilde{Y}_\can^\mathfrak{u}\arrow{d}{\widetilde{\eta}} \arrow[swap]{l}{f_{\rho_{\widetilde{Y}_\can}}}\\
(X_\reg)^\mathfrak{u} \arrow[bend right=60]{dd} \arrow{d}{\eta} \arrow{ur}{f_\rho} &&\widetilde{Y}_\can \arrow{d}{\varphi}\\
 \gamma^{-1}(X_\reg)\arrow{r}{\subset}\arrow{d}{\gamma} &Y \arrow{d}{\gamma} \arrow{r}{q_Y}& Y_\can\\
X_{\reg}\arrow{r}{\subset} & X&
\end{tikzcd}
\end{center}

In the above diagram, the map $\gamma\colon Y\to X$ is a maximally quasi-étale cover, i.e., a finite surjective quasi-étale Galois morphism  such that the natural map $\widehat{i}_*:\widehat{\pi}_1(Y_\reg)\to \widehat{\pi}_1(Y)$ is an isomorphism. The existence of this map is guaranteed by \autoref{thm:maximalquasietale}. Since $\gamma$ is unbranched over $X_\reg$, the universal cover of $X_\reg$ factorizes through $\gamma^{-1}(X_\reg) \subset Y$. Note that $Y$ is a klt variety of general type with nef canonical bundle, so $K_Y$ is again semiample. We denote by $Y_\can$ its canonical model, by $\varphi:\widetilde{Y}_\can\to Y_\can$ a strong resolution of singularities of $Y_\can$ and by $\widetilde{Y}_\can^\mathfrak{u}$ the universal cover of the resolution. As the singularities can be worse than canonical, note that $\widetilde{Y}_\can$ usually will not be of general type, see for example \cite[Thm.~1.1]{keum}, which describes finite quasi-étale quotients of fake projective planes having resolution of Kodaira dimension one. On the other hand, $\varphi^*K_{Y_\can}$ is still big and nef and so can be used as a polarization on $\widetilde{Y}_\can$. Moreover, differently from the situation of \cite{KMgentype}, $\varphi$ is not the canonical morphism of $\widetilde{Y}_\can$; however, we will see that the strategy of \emph{loc.~cit.} also works in this setting, once suitably modified.

The following result allows us to pass from the situation of a representation of the fundamental group of the regular locus of a klt variety to a representation of the fundamental group of a smooth compact variety. 
\begin{prop}
\label{prop:MWres}
The representation $\rho:\pi_1(X_\reg)\to G$ naturally induces a representation $\rho_{\widetilde{Y}_\can}:\pi_1(\widetilde{Y}_\can)\to G$ such that 
\[\int_X f_\rho^*(\omega_{\mathcal{Y}_G})\wedge \c_1(K_X)^{d-1}= \frac{1}{\deg(\gamma)}\int_{\widetilde{Y}_\can} f_{\rho_{\widetilde{Y}_\can}}^*(\omega_{\mathcal{Y}_G})\wedge \c_1(\varphi^*K_{Y_\can})^{d-1}.\]
\end{prop}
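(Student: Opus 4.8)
The plan is to construct the representation $\rho_{\widetilde Y_\can}$ by chasing the big diagram and then to compare the two integrals by a sequence of pullback identifications, each of which either preserves the integrand on the nose or multiplies the value by a controlled factor. First I would build $\rho_{\widetilde Y_\can}$ as follows. The quasi-étale Galois cover $\gamma\colon Y\to X$ is unbranched over $X_\reg$, so $\gamma^{-1}(X_\reg)\subseteq Y_\reg$ and we get a homomorphism $\pi_1(\gamma^{-1}(X_\reg))\to\pi_1(X_\reg)$ of finite index; precompose $\rho$ with it to obtain a representation of $\pi_1(\gamma^{-1}(X_\reg))$. Since $\gamma^{-1}(X_\reg)\subseteq Y_\reg$ has complement of codimension $\ge 2$ (because $\gamma$ is étale in codimension one and $X_\sing$ has codimension $\ge 2$), Remark~\ref{rem:codim2} identifies $\pi_1(\gamma^{-1}(X_\reg))\cong\pi_1(Y_\reg)$, so we get $\rho_Y'\colon\pi_1(Y_\reg)\to G$. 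Because $\gamma$ is a \emph{maximally} quasi-étale cover, Proposition~\ref{prop:factorization} (applied to a faithful linear representation $G\hookrightarrow\GL_n(\bC)$, and using that $G$ is algebraic so the Zariski closure of the image is unaffected) shows $\rho_Y'$ factors through $\pi_1(Y)$, giving $\rho_Y\colon\pi_1(Y)\to G$. By Takayama's theorem~\ref{thm:takayama}, $q_{Y*}\colon\pi_1(Y)\to\pi_1(Y_\can)$ is an isomorphism, so $\rho_Y$ descends to $\pi_1(Y_\can)$; finally pull back along $\varphi_*\colon\pi_1(\widetilde Y_\can)\to\pi_1(Y_\can)$ (surjective since $\varphi$ is a resolution of a normal variety, with connected fibres, so $\pi_1$ is surjective) to obtain $\rho_{\widetilde Y_\can}\colon\pi_1(\widetilde Y_\can)\to G$. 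By construction all of these representations are intertwined, so the harmonic maps $f_\rho$, $f_{\rho_Y}$, $f_{\rho_{Y_\can}}$, $f_{\rho_{\widetilde Y_\can}}$ are compatible with the vertical maps $\eta$, $\widetilde\eta$, $\varphi$ in the diagram — here one uses uniqueness of the harmonic/pluri-harmonic equivariant map (Theorem~\ref{thm:mochizuki}), so that e.g.\ $f_{\rho_{\widetilde Y_\can}} = f_{\rho_{Y_\can}}\circ\widetilde\eta$ up to the deck-group action, and similarly down the tower.

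Next I would carry out the integral comparison in three steps. \textbf{Step 1 (from $X$ to $Y$).} Since $\gamma\colon Y\to X$ is finite of degree $\deg(\gamma)$ and $K_Y=\gamma^*K_X$ (quasi-étale maps preserve canonical divisors, as they are étale in codimension one and everything is normal), and since $f_{\rho_Y}$ is the pullback of $f_\rho$ under $\gamma$ on the smooth loci, we get $\int_Y f_{\rho_Y}^*(\omega_{\cY_G})\wedge\c_1(K_Y)^{d-1} = \deg(\gamma)\int_X f_\rho^*(\omega_{\cY_G})\wedge\c_1(K_X)^{d-1}$, by the projection formula / change of variables for the generically finite map $\gamma$ (the classes involved being represented by forms away from a codimension-$\ge 2$ set, on which integration is insensitive, exactly as in the well-definedness discussion after \eqref{eq:MWinequ}). \textbf{Step 2 (from $Y$ to $Y_\can$).} The canonical morphism $q_Y\colon Y\to Y_\can$ is birational with $K_Y = q_Y^*K_{Y_\can}$ (Reminder~\ref{reminder}); the Higgs/flat data on $Y$ is the pullback of that on $Y_\can$ over the common open set $U\subseteq Y_{\can,\reg}$ with complement of codimension $\ge 2$ (exactly as in the proof of Theorem~\ref{thm:higgspoly}), so $f_{\rho_Y}^*\omega_{\cY_G}$ agrees with $q_Y^*(f_{\rho_{Y_\can}}^*\omega_{\cY_G})$ there; since intersection numbers of $\bQ$-Cartier classes pulled back along a birational morphism are preserved, $\int_Y f_{\rho_Y}^*(\omega_{\cY_G})\wedge\c_1(K_Y)^{d-1} = \int_{Y_\can} f_{\rho_{Y_\can}}^*(\omega_{\cY_G})\wedge\c_1(K_{Y_\can})^{d-1}$. \textbf{Step 3 (from $Y_\can$ to $\widetilde Y_\can$).} Identical reasoning with the birational resolution $\varphi\colon\widetilde Y_\can\to Y_\can$ and the big-and-nef class $\varphi^*K_{Y_\can}$: the equivariant harmonic data pulls back, $f_{\rho_{\widetilde Y_\can}}^*\omega_{\cY_G} = \varphi^*(f_{\rho_{Y_\can}}^*\omega_{\cY_G})$ as classes, and the projection formula for the birational $\varphi$ gives $\int_{\widetilde Y_\can} f_{\rho_{\widetilde Y_\can}}^*(\omega_{\cY_G})\wedge\c_1(\varphi^*K_{Y_\can})^{d-1} = \int_{Y_\can} f_{\rho_{Y_\can}}^*(\omega_{\cY_G})\wedge\c_1(K_{Y_\can})^{d-1}$. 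Chaining Steps 1--3 and dividing by $\deg(\gamma)$ yields the claimed identity.

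The main obstacle I expect is \textbf{Step 2/3 — making precise that the pullback of the harmonic form represents the right cohomology class across birational morphisms and across the non-trivial resolution $\varphi$}, since, as the authors emphasize, $\varphi$ is \emph{not} the canonical morphism of $\widetilde Y_\can$ and $\widetilde Y_\can$ need not be of general type. The point is that $f^*\omega_{\cY_G}$ is a priori only defined up to the exceptional loci, and one must argue it extends as the correct class; the clean way is to do everything via restriction to a general high-degree complete intersection curve for $\varphi^*K_{Y_\can}$ (respectively $K_{Y_\can}$), which by the base-point-freeness of a power of $\varphi^*K_{Y_\can}$ and Bertini can be taken inside the common open set $U$ over which $\varphi$ (resp.\ $q_Y$, resp.\ $\gamma$) is an isomorphism and which lies in the smooth locus — there the vector bundles and maps are genuine, the degrees are computed as honest integrals, and the identifications are tautological, exactly as in Remark~4.5 of \cite{grebkebpetduke} invoked in Theorem~\ref{thm:higgspoly}. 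A secondary, more bookkeeping-type obstacle is the careful verification that $\rho_{\widetilde Y_\can}$ is well-defined — in particular that the factorization of Proposition~\ref{prop:factorization} is compatible with passing to $\pi_1(Y_\can)$ via Takayama and that all harmonic maps in the tower are genuinely compatible — but this is guaranteed by the uniqueness clauses in Theorems~\ref{thm:maximalquasietale}, \ref{thm:takayama} and \ref{thm:mochizuki}, combined with functoriality of the harmonic-map construction under the étale-type maps $\eta$ and $\widetilde\eta$.
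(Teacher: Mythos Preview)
Your proposal is correct and follows essentially the same approach as the paper: you construct $\rho_{\widetilde Y_\can}$ via the chain $\pi_1(\gamma^{-1}(X_\reg))\cong\pi_1(Y_\reg)\to\pi_1(Y)\cong\pi_1(Y_\can)\leftarrow\pi_1(\widetilde Y_\can)$ using Remark~\ref{rem:codim2}, Proposition~\ref{prop:factorization}, and Takayama, and then compare the integrals by passing through the common open sets where all maps are isomorphisms and where general complete intersection curves for the (pullbacks of the) ample class $K_{Y_\can}$ live. The only cosmetic difference is that the paper carries out the integral comparison as a single chain of equalities over explicitly named open subsets $U_X, U_Y, U_\can, \widetilde U$, whereas you phrase it as three separate ``projection formula'' steps; your anticipated obstacle and its resolution via curves in the good locus is exactly how the paper handles it.
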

\begin{proof}
Since $\gamma$ is étale in codimension 1, the representation $\rho$ induces a representation $\gamma^*\rho:\pi_1(\gamma^{-1}(X_\reg))\to G$, which by Remark \ref{rem:codim2} extends to a representation of $\pi_1(Y_\reg)$. Recall now from \autoref{prop:factorization} that, since by construction there is an isomorphism $\hat{\pi}_1(Y_\reg)\to \hat{\pi}_1(Y)$ of étale fundamental groups, any representation of $\pi_1(Y_\reg)$ factors through a representation of $\pi_1(Y)$. Hence $\rho$ induces a representation $\rho_Y:\pi_1(Y)\to G$ that factorizes the representation $\gamma^*\rho$ induced by $\gamma$. By Takayama's theorem,  \autoref{thm:takayama}, the canonical map $q_Y$ induces an isomorphism of fundamental groups $\pi_1(Y)\cong \pi_1(Y_\can)$, which we can use to induce from $\rho_Y$  a representation $\rho_{Y_\can}:\pi_1(Y_\can)\to G$. We finally denote by $\rho_{\widetilde{Y}_\can}:=\varphi^*\rho_{Y_\can}$ the representation obtained as pull-back via the resolution $\varphi$, which similarly to $q_Y$ induces an isomorphism on the level of fundamental groups.

Since $\gamma$ is quasi-étale  and since $q_Y$ is the canonical map, we have the following equalities that lead to analogous equalities for the first Chern classes:
 \begin{equation}
     \label{eq:canonicalcompatibility}
     K_Y = \gamma^*(K_X) = q_Y^*(K_{Y_\can}).
 \end{equation}

Let us introduce the following collection of open subsets:
\begin{align*}&U_{\can} := Y_{\can, \reg} \setminus q_Y(\gamma^{-1}(X_\sing) \cup \mathrm{Exc}(q_Y)),\\
&U_Y:= (q_Y)^{-1}(U_\can) \subset \gamma^{-1}(X_\reg), \\
&\widetilde{U} := \varphi^{-1}(U_\can),\\&U_X:=\gamma(U_Y).
\end{align*}
Note that owing to \eqref{eq:canonicalcompatibility} the map $q_Y$ is $\mathrm{Gal}(Y/X)$-equivariant, the subset $U_Y$ is $\mathrm{Gal}(Y/X)$-invariant and therefore equal to $\gamma^{-1}(U_X)$ and that it is isomorphic to $U_\can$ via $q_Y$ by definition. Furthermore, note that $U_\can$  and $\widetilde U$ are isomorphic via $\varphi$.  Moreover, since $\pi_1(Y)$ and $\pi_1(Y_\can)$ are isomorphic via $(q_Y)_*$, and  $\pi_1(Y_\can)$ and $\pi_1(\widetilde{Y}_\can)$ are isomorphic via $\varphi_*$, also the map $(\varphi^\mathfrak{u})^{-1}\circ q_Y^\mathfrak{u}:\eta^{-1}(U_Y)\overset{\cong}{\to} \widetilde{\eta}^{-1}(\widetilde{U})$
induced on the lifts of $U_Y$ and $\widetilde{U}$ to the respective universal coverings is an isomorphism.
Since by construction \[\rho_{\widetilde{Y}_\can}=\rho \circ \gamma_* \circ (q_Y)_{*}^{-1}\circ \varphi_*,\] the uniqueness of the corresponding harmonic equivariant maps implies
 \begin{equation}
 \label{eq:Toledocompatibility}
 f_{\rho_{\widetilde{Y}_\can}}(p) ={f_\rho} \circ\left((\varphi^\mathfrak{u})^{-1}\circ q_Y^\mathfrak{u}\right)^{-1} (p) \quad \text{for all }p \in {\widetilde{\eta}^{-1}(\widetilde{U})}.
 \end{equation}

 From the previous observations we deduce the following chain of equalities
 \begin{align*}
 \deg(\gamma)\cdot\int_{U_X} f_\rho^*(\omega_{\mathcal{Y}_G})\wedge \c_1(K_X)^{d-1}&=\int_{U_Y} \gamma^*\left(f_\rho^*(\omega_{\mathcal{Y}_G})\wedge \c_1(K_X)^{d-1}\right) \\
 &=\int_{U_Y} \gamma^*\left(f_\rho^*(\omega_{\mathcal{Y}_G})\right) \wedge q_Y^*\c_1(K_{Y_\can})^{d-1}\\
 &=\int_{U_\can} (q_Y^{-1})^*\gamma^*\left(f_\rho^*(\omega_{\mathcal{Y}_G})\right) \wedge \c_1(K_{Y_\can})^{d-1}\\
&=\int_{\widetilde{U}} \varphi^*\left((q_Y^{-1})^*\gamma^*\left(f_\rho^*(\omega_{\mathcal{Y}_G})\right) \wedge \c_1(K_{Y_\can})^{d-1}\right)\\
&=\int_{\widetilde{U}} f_{\rho_{\widetilde{Y}_\can}}^*(\omega_{\mathcal{Y}_G})\wedge \c_1(\varphi^*K_{Y_\can})^{d-1}.
 \end{align*}
 Indeed, the first equality holds, since $\gamma$ is a maximally quasi-étale morphism and owing to the fact that $U_Y=\gamma^{-1}(U_X)$, as noticed above. The second equality follows from \eqref{eq:canonicalcompatibility}, the third and fourth ones use the fact that $U_Y$ and $U_\can$ are isomorphic via $q_Y$, that $U_\can$  and $\widetilde U$ are isomorphic via $\varphi$, and that the last equality follows from \eqref{eq:Toledocompatibility}.
 
Note finally that, since $K_{Y_\can}$ is ample, we can choose a smooth representative of the curve class $\c_1(K_{Y_\can})^{d-1}$ completely contained in $U_\can$. This also means that we can do the same for $\varphi^*\c_1(K_{Y_\can})^{d-1}$ in $\widetilde{U}$ and for $q_Y^*\c_1(K_{Y_\can})^{d-1}=\gamma^*\c_1(K_{X})^{d-1}$ in $U_Y$. Owing to $U_Y=\gamma^{-1}(U_X)$, we can then finally also choose a smooth representative of the curve class $\deg(\gamma)\c_1(K_{X_\can})^{d-1}$ completely contained in $U_X$, and hence all the previous integrals are indeed  well-defined and extend to the respective full spaces. This concludes the proof.
\end{proof}

\begin{cor}
    \label{cor:compatibilities}
    Let $X$ and $\rho$ as in \autoref{thm:equality} and $\rho_{\widetilde{Y}_\can}$ as above. Then the Higgs bundle $(V_{\widetilde{Y}_\can}\oplus W_{\widetilde{Y}_\can},\alpha_{\widetilde{Y}_\can}\oplus\beta_{\widetilde{Y}_\can})$ on $\widetilde{Y}_\can$ induced by $\rho_{\widetilde{Y}_\can}$ satisfies 
    \[\deg(W_{\widetilde{Y}_\can})=q\frac{K_{Y_\can}^d}{d+1},\quad \deg(W_{\widetilde{Y}_\can})=-\deg(\Im(\beta_{\widetilde{Y}_\can}))\] and  $\beta_{\widetilde{Y}_\can}:W_{\widetilde{Y}_\can}\otimes T_{\widetilde{Y}_\can}\to V_{\widetilde{Y}_\can}$ is generically injective.
\end{cor}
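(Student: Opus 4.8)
The plan is to deduce all three assertions by transporting the corresponding properties of the Higgs bundle $(E_\reg,\theta_\reg)$ on $X_\reg$ along the maps of the diagram above, reusing the open subsets and identifications already set up in the proof of \autoref{prop:MWres}. First I would recall from there the dense open subsets $U_X\subseteq X_\reg$, $U_Y=\gamma^{-1}(U_X)\subseteq Y_\reg$, $U_\can\subseteq Y_{\can,\reg}$ and $\widetilde{U}=\varphi^{-1}(U_\can)\subseteq\widetilde{Y}_\can$, whose complements in the respective ambient varieties all have codimension at least two, with $q_Y$ and $\varphi$ restricting to isomorphisms of $U_Y$ and $\widetilde{U}$ onto $U_\can$ and $\gamma$ restricting to a finite étale surjection $U_Y\to U_X$ of degree $\deg(\gamma)$; writing $\psi\colon\widetilde{U}\to U_X$ for the resulting composite, a finite étale surjection of degree $\deg(\gamma)$, one has $\psi^*K_X=\varphi^*K_{Y_\can}$ over $\widetilde{U}$ by \eqref{eq:canonicalcompatibility}. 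The key observation is that, by the uniqueness statement in \autoref{thm:mochizuki} — concretely by \eqref{eq:Toledocompatibility} — the harmonic map $f_{\rho_{\widetilde{Y}_\can}}$ agrees over $\widetilde{U}$ with the relevant restriction of $f_\rho$, so that the associated Higgs bundles agree there: $(E_{\widetilde{Y}_\can},\theta_{\widetilde{Y}_\can})|_{\widetilde{U}}\cong\psi^*\bigl((E_\reg,\theta_\reg)|_{U_X}\bigr)$ compatibly with the $\SU(p,q)$-splittings, whence $W_{\widetilde{Y}_\can}|_{\widetilde{U}}\cong\psi^*(W_\reg|_{U_X})$, and, since $\psi$ is étale and therefore identifies tangent sheaves, $\beta_{\widetilde{Y}_\can}|_{\widetilde{U}}\cong\psi^*(\beta_\reg|_{U_X})$ as morphisms $W\otimes T\to V$ and $\Im(\beta_{\widetilde{Y}_\can})|_{\widetilde{U}}\cong\psi^*\bigl(\Im(\beta_\reg)|_{U_X}\bigr)$.

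Granting this, generic injectivity of $\beta_{\widetilde{Y}_\can}$ is immediate, since $\psi$ is dominant and generic injectivity of $\beta_\reg$ passes to its pullback. For the two degree identities I would argue exactly as in the proof of \autoref{prop:MWres}: since $\varphi^*K_{Y_\can}$ is big, nef and semiample, all the degrees involved can be computed on a general high-degree complete intersection curve $\widetilde{C}\subset\widetilde{U}$, taken as the $\psi$-preimage of a general such curve $C\subset U_X$ computing degrees with respect to $K_X$, so that $\psi|_{\widetilde{C}}$ has degree $\deg(\gamma)$ and $\varphi^*K_{Y_\can}|_{\widetilde{C}}=\psi^*K_X|_{\widetilde{C}}$; restricting $W_\reg$ and $\Im(\beta_\reg)$ to such curves then gives $\deg(W_{\widetilde{Y}_\can})=\deg(\gamma)\cdot\deg(W_\reg)$ and $\deg\bigl(\Im(\beta_{\widetilde{Y}_\can})\bigr)=\deg(\gamma)\cdot\deg\bigl(\Im(\beta_\reg)\bigr)$. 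Combining these with the hypotheses $\deg(W_\reg)=q\,K_X^d/(d+1)$ and $\deg(W_\reg)=-\deg\bigl(\Im(\beta_\reg)\bigr)$ of \autoref{thm:equality}, and with the identity $K_{Y_\can}^d=K_Y^d=(\gamma^*K_X)^d=\deg(\gamma)\cdot K_X^d$ following from \eqref{eq:canonicalcompatibility}, will yield the two stated equalities. An equivalent route to the first identity is to combine \autoref{prop:MWres} with the degree expression \eqref{eq:toledoinv} applied on $\widetilde{Y}_\can$, using that $c_1(E_{\widetilde{Y}_\can})\cdot[\varphi^*K_{Y_\can}]^{d-1}=0$ — which one obtains by pulling back the vanishing $c_1(i_*E_{Y_\can,\reg})=0$ from \autoref{thm:higgspoly} applied to the projective klt variety $Y_\can$, which is of general type with ample canonical class.

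The main obstacle is not conceptual but bookkeeping: one must verify that the complements of $U_X,U_Y,U_\can,\widetilde{U}$ really have codimension at least two (so that these opens contain the general complete intersection curves used above and so that $\gamma^*\rho$ extends across $\gamma^{-1}(X_\sing)$), that $\varphi$ restricts to an isomorphism over $U_\can$, and that $\Im(\beta)$ is locally free along a general such curve so that its degree is insensitive to what happens over the singular and exceptional loci — all of which is precisely the content of the proof of \autoref{prop:MWres}. Granting that, the corollary is essentially a formal consequence of \autoref{prop:MWres} together with the hypotheses of \autoref{thm:equality}.
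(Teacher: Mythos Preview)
Your approach is essentially the paper's: identify the Higgs bundles over the big open $\widetilde{U}$ via \eqref{eq:Toledocompatibility}, then compute all degrees on complete intersection curves lying there, exactly as in the proof of \autoref{prop:MWres}; the paper likewise reduces the first identity to \autoref{prop:MWres} combined with \eqref{eq:toledoinv}, and handles the remaining two claims by the compatibility of $\beta$ with $df$ under the identifications already established.

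One inaccuracy to fix: it is \emph{not} true that the complement of $\widetilde{U}$ in $\widetilde{Y}_\can$ has codimension at least two. Since $\varphi$ is a resolution of singularities and $U_\can\subseteq Y_{\can,\reg}$, the exceptional divisor $\Ex(\varphi)$ lies entirely in $\widetilde{Y}_\can\setminus\widetilde{U}$, and this is typically a divisor. Fortunately your argument does not actually need this: the reason the general complete intersection curve $\widetilde{C}$ for $\varphi^*K_{Y_\can}$ lies in $\widetilde{U}$ is not a codimension count on $\widetilde{Y}_\can$, but rather that $K_{Y_\can}$ is ample, so a general complete intersection curve for $K_{Y_\can}$ lies in $U_\can$, and its $\varphi$-preimage (over which $\varphi$ is an isomorphism) represents $[\varphi^*K_{Y_\can}]^{d-1}$ and sits inside $\widetilde{U}$. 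This is precisely how the paper phrases it at the end of the proof of \autoref{prop:MWres}. Once you replace the codimension-two claim for $\widetilde{U}$ by this observation, the rest of your argument goes through unchanged.
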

\begin{proof}
    By the discussion around \eqref{eq:kahlerrelation} and \autoref{prop:MWres}, we have 
    \[\deg W_\reg=\tau(\rho)=\frac{1}{\deg(\gamma)}\int_{\widetilde{Y}_\can} f_{\rho_{\widetilde{Y}_\can}}^*(\omega_{\mathcal{Y}_G})\wedge \c_1(\varphi^*K_{Y_\can})^{d-1}=\frac{\deg(W_{\widetilde{Y}_\can})}{\deg(\gamma)}.\]
    This implies, by the maximality of $\rho$ and the relations \eqref{eq:canonicalcompatibility} among the canonical bundle, that  
    \[\deg(W_{\widetilde{Y}_\can})=\deg(\gamma)\left(q\frac{K_{X}^d}{d+1}\right)=q\frac{(\gamma^*K_{X})^d}{d+1}=q\frac{(q_Y^*K_{Y_\can})^d}{d+1}=q\frac{K_{Y_\can}^d}{d+1}.\]
    
    For the other two claims we can use  the compatibility \eqref{eq:Toledocompatibility} of the harmonic maps and the fact that $\beta_\reg$, resp. $\beta_{\widetilde{Y}_\can}$, is the $(1,0)$-component of $df_\rho$, resp. $df_{\rho_{\widetilde{Y}_\can}}$. Then the same arguments as the ones in the proof of \autoref{prop:MWres} yield $\deg(\Im(\beta_\reg))=\deg(\Im(\beta_{\widetilde{Y}_\can}))$. Moreover the previous comment also implies that, since $\gamma$ is quasi-étale and $q_Y$ and $\varphi$ are birational maps, the generic injectivity of $\beta_\reg$ is equivalent to the generic injectivity of $\beta_{\widetilde{Y}_\can}$.
\end{proof}

At this point, the strategy is to follow the proof of \cite[Prop.~4.4]{KMgentype} for the representation $\rho_{\widetilde{Y}_\can}$ of the full fundamental group of a compact smooth variety. Note that, as remarked above, the smooth variety $\widetilde{Y}_\can$ is not of general type in general, so we cannot apply \cite[Prop.~4.4]{KMgentype} directly, but we can prove the following variation.

\begin{prop}
\label{prop:coverball}
Let $Z$ be a normal projective variety  of dimension $d\geq 2$ such that $K_Z$ is ample and let $\varphi:\widetilde{Z}\to Z$ be a resolution of singularities. Let moreover $\rho_Z:\pi_1(Z)\to \SU(p,q)$ be a representation with $p\geq qd$ and $q\geq 1$ and let $\rho:=\varphi^*\rho_Z:\pi_1(\widetilde{Z})\to \SU(p,q)$ be the pull-back representation. Let $(E=V\oplus W,\theta=\beta\oplus \alpha)$ be the Higgs bundle on $\widetilde{Z}$ associated to $\rho$. Assume that $\tau(\rho)$ is maximal with respect to the polarization $\varphi^*K_Z$, i.e. $\deg(W)=q\frac{K_{Z}^d}{d+1}$, that we have $\deg(W)=-\deg(\Im(\beta))$ and that $\beta:W\otimes T_{\widetilde{Z}}\to V$ is generically injective. Then, $Z$ is a smooth ball quotient. Moreover, the $\rho_Z$-equivariant harmonic map $f_Z:Z^\mathfrak{u}\to \cY_G$ is a holomorphic  proper embedding from the universal cover of $Z$  onto a totally geodesic copy of complex hyperbolic $d$-space.
\end{prop}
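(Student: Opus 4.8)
The plan is to carry out the argument of \cite[Prop.~4.4]{KMgentype} on the smooth variety $\widetilde{Z}$, using the big and semiample polarization $L:=\varphi^{*}K_{Z}$ in place of a canonical polarization, and then to transfer the resulting uniformization statement from $\widetilde{Z}$ down to $Z$. Throughout, intersection numbers against $L^{d-1}$ are computed on general high-degree complete intersection curves contained in the open locus over which $\varphi$ is an isomorphism, and I will use the projection-formula identities $\deg_{L}(\varphi^{*}\mathcal{F})=\deg_{K_{Z}}(\mathcal{F})$ and, writing $K_{\widetilde{Z}}=\varphi^{*}K_{Z}+R$ with $R$ supported on $\mathrm{Exc}(\varphi)$, the identity $\deg_{L}(T_{\widetilde{Z}})=-\deg_{L}(K_{\widetilde{Z}})=-K_{Z}^{d}$; note also that an effective divisor on $\widetilde{Z}$ of vanishing $L$-degree is necessarily $\varphi$-exceptional, since $L$ is the pullback of an ample class.

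First, the Higgs-bundle analysis. The representation $\rho$ is reductive and takes values in $\SU(p,q)$, so $(E,\theta)$ is $L$-polystable with $\c_{1}(E)=0$; for the big and nef polarization $L$ this follows as in the proof of \autoref{thm:higgspoly}. Since by hypothesis $\deg_{L}(W\oplus\overline{\Im(\beta)})=\deg(W)+\deg(\Im(\beta))=0$ and $W\oplus\overline{\Im(\beta)}$ is a saturated Higgs subsheaf of $(E,\theta)$, polystability forces it to be a Higgs direct summand, whose complementary summand carries the zero Higgs field and hence is a unitary flat subbundle that plays no geometric role; discarding it, we may assume $V=\overline{\Im(\beta)}$, $p=qd$, and that $\beta\colon W\otimes T_{\widetilde{Z}}\to V$ is an isomorphism away from an effective divisor $D$. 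Exactly as in \cite{KMgentype}, polystability together with $\c_{1}(E)=0$ forces $\alpha=0$, so $f$ is holomorphic. Comparing determinants of the generically bijective map $\beta$ gives $\det V\cong(\det W)^{\otimes d}\otimes(\det T_{\widetilde{Z}})^{\otimes q}\otimes\mathcal{O}_{\widetilde{Z}}(D)$, hence
\[-\deg(W)\;=\;\deg(V)\;=\;d\deg(W)-qK_{Z}^{d}+\deg_{L}(D),\]
so that $\deg(W)=\tfrac{qK_{Z}^{d}-\deg_{L}(D)}{d+1}$. Maximality $\deg(W)=\tfrac{qK_{Z}^{d}}{d+1}$ therefore gives $\deg_{L}(D)=0$, i.e.\ $D$ is $\varphi$-exceptional, so $\beta$ is an isomorphism over $\widetilde{Z}\setminus\mathrm{Exc}(\varphi)$, which maps isomorphically onto the complement in $Z$ of a closed set of codimension at least two. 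On that open set $V\cong W\otimes T_{\widetilde{Z}}$ and $(d+1)\c_{1}(W)=q\c_{1}(K_{Z})$; for $q\geq2$ a further step, identical to \cite{KMgentype} and using the integrability of $\theta$ together with polystability, shows that $\rho$ is conjugate to $\rho_{0}\otimes\mathrm{Id}_{\bC^{q}}$ for a representation $\rho_{0}$ into $\SU(d,1)$, which is what produces the induced holomorphic sectional curvature $-1/q$.

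Second, the uniformization and descent. By uniqueness of harmonic equivariant maps (as in the proof of \autoref{prop:MWres}) one has $f=f_{Z}\circ\varphi^{\mathfrak{u}}$, so $\alpha=0$ and the non-degeneracy of $\beta$ over the $\varphi$-isomorphic locus translate into the facts that $f_{Z}$ is holomorphic and is an immersion over $Z\setminus\varphi(\mathrm{Exc}(\varphi))$, with image a totally geodesic complex $d$-dimensional submanifold of $\cY_{G}$, i.e.\ a copy of $\bH^{d}_{\bC}$ of induced holomorphic sectional curvature $-1/q$. Over the big open subset $Z\setminus\varphi(\mathrm{Exc}(\varphi))$ the pulled-back metric $f_{Z}^{*}\omega_{\cY_{G}}$ then has constant negative holomorphic sectional curvature and the volume dictated by $\deg(W)=\tfrac{qK_{Z}^{d}}{d+1}$; this is exactly the equality case of the estimate underlying \autoref{thm:MWinequality}, and the equality discussion of \cite{KMgentype} shows that $f_{Z}$ is a proper biholomorphism from $Z^{\mathfrak{u}}$ onto $\bH^{d}_{\bC}$. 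In particular $Z^{\mathfrak{u}}$ is smooth, so $Z$, being a free properly discontinuous quotient of $Z^{\mathfrak{u}}=\bH^{d}_{\bC}$, is a smooth ball quotient; being smooth with ample canonical class it coincides with its own canonical model, so the resolution $\varphi$ is an isomorphism. This establishes all assertions.

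The step I expect to be the main obstacle is the interplay between $\widetilde{Z}$ and $Z$: because the singularities of $Z$ may be worse than canonical, $\varphi$ need not be crepant, $L=\varphi^{*}K_{Z}$ is only big and nef rather than ample, and the degeneracy divisor $D$ of $\beta$ cannot be shown to be empty, only $\varphi$-exceptional. Consequently every semistability, degree, and complete-intersection-curve estimate that \cite{KMgentype} ran with the canonical polarization must be re-examined with $L$, and — more seriously — all non-degeneracy and uniformization conclusions have to be obtained first over the big open subset $Z\setminus\varphi(\mathrm{Exc}(\varphi))$ and only then propagated to all of $Z$ and $\widetilde{Z}$, using the rigidity of ball quotients (absence of rational curves in bounded symmetric domains) to conclude retroactively that $Z$ was smooth and $\varphi$ an isomorphism. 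A secondary, more routine point, following \emph{loc.~cit.}, is the reduction of the $q\geq2$ case to the rank-one case $\SU(d,1)$ by extracting a trivial $\bC^{q}$-factor from the polystable Higgs datum.
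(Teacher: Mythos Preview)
Your overall strategy matches the paper's: both adapt \cite[Prop.~4.4]{KMgentype} to the polarization $L=\varphi^{*}K_{Z}$, show that the degeneracy divisor $D$ of $\beta$ is $\varphi$-exceptional, and then descend to $Z$. The first half of your argument (polystability, holomorphicity of $f$, $\deg_{L}(D)=0$ forcing $D\subset\Ex(\varphi)$) is essentially what the paper does.

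The genuine gap is in your second half. You write that ``the equality discussion of \cite{KMgentype} shows that $f_{Z}$ is a proper biholomorphism from $Z^{\mathfrak{u}}$ onto $\bH^{d}_{\bC}$'', but this citation does not suffice: \cite{KMgentype} treats smooth varieties, and the entire content of the present proposition is to handle the \emph{singular} $Z$. The paper supplies the missing mechanism explicitly. First, $f_{Z}$ is not obtained as a harmonic map on the singular $Z$ (your relation $f=f_{Z}\circ\varphi^{\mathfrak{u}}$ presupposes such a map exists, which has not been established); rather, once $f$ is known to be holomorphic, its restriction to the $\varphi$-isomorphic locus descends and is extended across the codimension-$\ge 2$ complement by Riemann extension on the normal $Z^{\mathfrak{u}}$. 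Second, on $Z_{\reg}$ one writes $K_{Z}=f_{Z}^{*}K_{\cZ}+R$ with $R$ the effective Jacobian divisor, extends this by normality, and then uses maximality together with \emph{ampleness} of $K_{Z}$ to compute
\[
K_{Z}^{d}=f_{Z}^{*}K_{\cZ}\cdot K_{Z}^{d-1}=K_{Z}^{d}-R\cdot K_{Z}^{d-1},
\]
forcing $R=0$. Finally, purity of the branch locus upgrades ``local biholomorphism on $Z_{\reg}$'' to ``local biholomorphism on all of $Z$'', and only then is $Z$ smooth. Your proposed mechanism in the obstacle paragraph (``rigidity of ball quotients, absence of rational curves'') is not the tool that does the job; the sequence \emph{Riemann extension $\to$ ramification formula $\to$ ampleness kills $R$ $\to$ purity} is precisely the adaptation of \cite{KMgentype} that the proposition requires, and it is absent from your proposal.
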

\begin{proof}
We will follow the proof of \cite[Prop.~4.4]{KMgentype}.
As shown in \emph{loc.~cit.}, the fact that $\beta$ is generically injective implies that the harmonic map $f$ associated to $\rho$ is not only a generic immersion, but in fact holomorphic. Moreover, if we denote by $V'$ the saturation of $\Im(\beta)$ in $V$, we have that $\det \beta:\det(W)^d\otimes K_{\widetilde{Z}}^{-q}\to \det V'$ is generically injective, and so there exists an effective divisor $D$ such that \[\det(W)^d\otimes K_{\widetilde{Z}}^{-q}\otimes \mathcal{O}(D)\cong \det V'.\] Since $\rho$ is maximal, we must have $\deg(D)=\int_D \varphi^*c_1(K_Z)^{d-1}=0$. This in turn implies that the support of $D$ has to be contained in the exceptional set $\Ex(\varphi)$ of $\varphi$, since $K_Z$ is ample. In summary, up to this point we have shown that $f:\widetilde{Z}^\mathfrak{u}\to \cY_G$ is holomorphic and an immersion outside the lift of $\Ex(\varphi)$ to $\widetilde{Z}^\mathfrak{u}$.

Using the argument presented on \cite[pp.~226f]{KMgentype}, one shows that the image of $f$ lies in a totally geodesic submanifold $\cZ$, where $\cZ\cong \bB^d$ is  a ball of maximal possible holomorphic sectional curvature in $\cY_G$. Note that the maximality of $\rho$ as a representation in $\SU(p,q)$ implies the maximality  as a representation in $\Aut(\cZ)$ since $\omega_\cZ=\frac{1}{q}{\omega_{\cY_G}}|_{\cZ}$.

Since $\rho$ is defined as the pull-back of the representation $\rho_Z$, the restriction of the holomorphic map $f$ to $(\varphi^{-1}(Z_\reg))^\mathfrak{u}$ factors through the universal cover of the regular locus of $Z$. Since $Z$ is normal, the singular locus has codimension at least two and we can extend the map to $\rho_Z$-equivariant holomorphic map $f_Z:Z^\mathfrak{u}\to \cZ$. Since $f_Z^*K_\cZ$ descends to $Z$ by equivariance, on the regular locus of $Z$ we have $K_Z=f_Z^*K_\cZ+R$, where $R$ is the zero locus of the Jacobian of $f_Z$. Again by normality of $Z$, the previous relation can be extended to the full of $Z$.

By the maximality of $\rho$ and by \eqref{eq:kahlerrelation} we have
\[K_Z^d=f^*K_\cZ\cdot (\varphi^*K_Z)^{d-1}=f_Z^*K_\cZ\cdot K_Z^{d-1}=K_Z^{d}-R\cdot K_Z^{d-1}\]
which implies $R=0$, since $R$ is effective and $K_Z$ is ample. Hence $f_Z$ is a local biholomorphism on $Z_\reg$. As explained at the end of the proof of \emph{loc.~cit.},  the purity of the ramification locus implies that $f_Z$ is a local biholomorphism on the full of $Z$ and so $Z$ is smooth and can be endowed with a metric of constant holomorphic sectional curvature $-1$, i.e. it is uniformized by the ball $\bB^d$.
\end{proof}

The last ingredient needed to prove \autoref{thm:equality} is the following lemma.

\begin{lemma}
\label{lem:canonicalmodels}
Let $\gamma:Y\to X$ be a quasi-étale Galois cover of projective klt varieties with big and nef canonical divisors. Then, the canonical model $X_\can$ of $X$ is a quasi-étale finite quotient of $Y_\can$.
\end{lemma}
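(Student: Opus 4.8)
The plan is to exploit the Galois group $\Gamma := \mathrm{Gal}(Y/X)$ and the compatibility of canonical rings under quasi-étale morphisms. First I would recall that since $\gamma\colon Y\to X$ is quasi-étale (étale in codimension one) between normal varieties, and $X$ is klt, the variety $Y$ is again klt with big and nef canonical divisor, and the ramification formula gives $K_Y = \gamma^*K_X$ (there is no discriminant contribution, precisely because $\gamma$ is étale in codimension one and both varieties are normal, so the two $\mathbb{Q}$-Cartier divisors agree away from a codimension-two set and hence everywhere). In particular, for every $m$ divisible enough that $mK_X$ and $mK_Y$ are Cartier, pullback of sections induces an injection $H^0(X, mK_X)\hookrightarrow H^0(Y, mK_Y)$, and the image is exactly the $\Gamma$-invariant subspace $H^0(Y, mK_Y)^\Gamma$: this is the standard fact that for a finite group quotient $\gamma\colon Y\to Y/\Gamma = X$ between normal varieties, global sections of a $\Gamma$-linearized reflexive sheaf descend precisely when they are invariant. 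Thus $R(K_X) = R(K_Y)^\Gamma$ as graded rings.

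Next I would translate this into a statement about the canonical models. By Reminder~\ref{reminder}, $X_\can = \Proj R(K_X)$ and $Y_\can = \Proj R(K_Y)$, and $K_{X_\can}$, $K_{Y_\can}$ are the respective ample canonical polarizations. The identity $R(K_X) = R(K_Y)^\Gamma$ exhibits $\mathrm{Spec}$ (resp.\ $\Proj$) of the invariant ring as the quotient of $\mathrm{Spec}$ (resp.\ $\Proj$) of the ring by $\Gamma$; concretely, $\Gamma$ acts on $Y_\can = \Proj R(K_Y)$ (since it acts on the graded ring $R(K_Y)$ compatibly with the grading), and the induced morphism $Y_\can \to Y_\can/\Gamma = \Proj\bigl(R(K_Y)^\Gamma\bigr) = X_\can$ is finite and surjective. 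Call this morphism $\gamma_\can\colon Y_\can\to X_\can$. It is automatically Galois with group $\Gamma$ (or a quotient of $\Gamma$, if the action on $Y_\can$ is not faithful — but since $K_Y = q_Y^*K_{Y_\can}$ and $q_Y$ is $\Gamma$-equivariant birational, faithfulness on $Y$ forces faithfulness on $Y_\can$). Moreover $q_X\colon X\to X_\can$ and $q_Y\colon Y\to Y_\can$ fit into a commutative square with $\gamma$ and $\gamma_\can$, because both ways around compute the same rational map determined by $|mK|$ for $m\gg 0$; equivalently, this square is the one already appearing implicitly in Proposition~\ref{prop:MWres} via $K_Y = \gamma^*K_X = q_Y^*K_{Y_\can}$.

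It remains to check that $\gamma_\can$ is quasi-étale, i.e.\ étale in codimension one on $X_\can$. This is the step I expect to be the main obstacle, since $q_X$ and $q_Y$ contract divisors and a priori could create codimension-one branch loci for $\gamma_\can$ out of codimension-two phenomena upstairs. The way around it is again the ramification formula: $\gamma_\can$ is a finite morphism of normal varieties with $K_{Y_\can} = \gamma_\can^* K_{X_\can}$. Indeed $\gamma_\can^*K_{X_\can}$ pulls back under $q_Y$ to $\gamma^* q_X^* K_{X_\can} = \gamma^*K_X = K_Y = q_Y^*K_{Y_\can}$, and since $q_Y$ is birational between normal varieties, $q_Y^*$ is injective on $\mathbb{Q}$-Cartier divisor classes, giving $\gamma_\can^*K_{X_\can} = K_{Y_\can}$. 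For a finite morphism of normal varieties, the Hurwitz/ramification formula reads $K_{Y_\can} = \gamma_\can^* K_{X_\can} + \mathrm{Ram}(\gamma_\can)$ with $\mathrm{Ram}(\gamma_\can)$ an effective divisor supported on the ramification locus; comparing with the identity just obtained forces $\mathrm{Ram}(\gamma_\can) = 0$, hence $\gamma_\can$ is unramified in codimension one, i.e.\ quasi-étale. (One should note that $\gamma_\can$ has relative dimension zero automatically, being finite, and $\dim X_\can = \dim Y_\can = d$ since both are birational to $X$, $Y$ respectively.) This shows $X_\can$ is a quasi-étale finite quotient of $Y_\can$, as claimed. $\qed$
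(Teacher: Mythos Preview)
Your proof is correct and follows the same overall strategy as the paper: identify $R(K_X)$ with the invariant ring $R(K_Y)^\Gamma$, take $\Proj$ to obtain a finite Galois morphism $\gamma_\can\colon Y_\can \to X_\can$ fitting into a commutative square with $q_X$, $q_Y$, and $\gamma$, and then verify that $\gamma_\can$ is quasi-\'etale. The only difference is in this last verification. The paper argues by contraposition via strict transforms: if $\gamma_\can$ ramified along a divisor $D \subset Y_\can$, then, since $q_Y$ is an isomorphism over the generic point of $D$ (the image of its exceptional locus having codimension at least two by normality of $Y_\can$), the strict transform of $D$ in $Y$ would be a divisor along which $\gamma$ ramifies, contradicting that $\gamma$ is quasi-\'etale. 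You instead deduce $K_{Y_\can} = \gamma_\can^* K_{X_\can}$ from the already established canonical bundle identities together with injectivity of $q_Y^*$ on $\bQ$-Cartier classes, and then read off the vanishing of the ramification divisor from the Hurwitz formula for finite morphisms between normal varieties. Both arguments are short and standard; yours makes the role of the canonical bundle identities more explicit, while the paper's is more geometric and avoids appealing to the ramification formula on the (possibly singular) canonical models.
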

\begin{proof}
Recall that the canonical model of a variety $Z$ can be defined as the Proj of its canonical ring $R(K_Z)$. Since  $\gamma\colon Y\to X$ is a quasi-étale Galois cover, i.e., in particular it is the quotient map by a finite group $G\subseteq \Aut(Y)$, we have $H^0(X,mK_X)\cong H^0(Y,mK_Y)^G$  for all $m\in \mathbb{N}_0$, where the right hand side is the ring of $G$-invariant sections. 
This implies that there is an induced map 
\[\gamma_\can:\Proj \left(R(K_Y)\right)\to \Proj \left(R(K_Y)^G\right)\cong \Proj \left(R(K_X)\right)\]
between the canonical models which is finite and such that $\gamma_\can\circ q_Y=q_X\circ\gamma$, where $q_X$ and $q_Y$ are the canonical morphisms cf.~Reminder~\ref{reminder}.
The previous commutativity implies that if $\gamma$ is quasi-étale, also $\gamma_\can$ is, since if we could find a divisor $D\subset Y_\can$ along which $\gamma_\can$ ramifies, then $\gamma$ would not be étale along the strict transform of $D$.
\end{proof}

We are finally able to prove the main uniformization result.

\begin{proof}[Proof of \autoref{thm:equality}]
  It is enough to put together all the ingredients we have shown above. Thanks to \autoref{cor:compatibilities}, we can apply \autoref{prop:coverball} with $Z=Y_\can$ and $\widetilde{Z}=\widetilde{Y}_\can$. This gives that $Y_\can$ is a smooth ball quotient, and that its associated harmonic map is a totally geodesic proper holomorphic embedding. Moreover, by \autoref{lem:canonicalmodels} we have that $X_\can$  is a quasi-étale finite quotient of the smooth ball quotient $Z=Y_\can$.
\end{proof}
As announced at the beginning of the current section, to conclude the proof of \autoref{thm:intro}, let us finally comment on how to deal with the case of minimal $\tau(\rho)$.

Using \autoref{prop:MWres} and \autoref{cor:compatibilities}, as above we induce a representation of $\pi_1(Y_\can)$  with minimal Toledo invariant. Then, we change the complex structure on $Y_\can$ to its opposite, i.e., we consider the variety $\overline{Y_\can}$. The fundamental group stays the same, but the Toledo invariant becomes maximal due to $\c_1(K_{\overline{Y_\can}})=-\c_1(K_{Y_\can})$. So, we are back in the situation considered before and conclude that $\overline{Y_\can}$ is a smooth ball quotient and that the associated harmonic map is a totally geodesic proper holomorphic embedding. This implies that the same holds for $Y_\can$ apart from the fact that its associated harmonic map is now antiholomorphic.  Moreover, as before, \autoref{lem:canonicalmodels} implies that $X_\can$  is a quasi-étale finite quotient of the smooth ball quotient $Y_\can$, as claimed.

The converse of the equality part of the statement of \autoref{thm:intro} follows from the corresponding statement in the main Theorem of \cite{KMgentype}.

	\vspace{0.4cm}
	\printbibliography
	
	\vfill
\end{document}